\documentclass{amsart}
\usepackage{amsmath,amsfonts,amssymb,amscd,amsthm,color}
\usepackage{framed}
\DeclareMathOperator\End{End}
\DeclareMathOperator\Id{Id}
\DeclareMathOperator\Irr{Irr}
\DeclareMathOperator\Res{Res}
\DeclareMathOperator\Trace{Trace}
\DeclareMathOperator\SL{SL}
\DeclareMathOperator\PGL{PGL}
\DeclareMathOperator\GL{GL}
\newcommand\BC{{\mathbb C}}
\newcommand\BF{{\mathbb F}}
\newcommand\BQ{{\mathbb Q}}
\newcommand\BZ{{\mathbb Z}}
\newcommand\cB{{\mathcal B}}
\newcommand\cH{{\mathcal H}}
\newcommand\cL{{\mathcal L}}
\newcommand\cO{{\mathcal O}}
\newcommand\bB{{\bf B}}
\newcommand\bG{{\bf G}}
\newcommand\bH{{\bf H}}
\newcommand\bL{{\bf L}}
\newcommand\bM{{\bf M}}
\newcommand\bS{{\bf S}}
\newcommand\bT{{\bf T}}
\newcommand\bZ{{\bf Z}}
\newcommand\bs{{\bf s}}
\newcommand\bt{{\bf t}}
\newcommand\bw{{\bf w}}
\newcommand\Qlbar{\overline\BQ_\ell}
\newcommand\uni{\text{uni}}
\newcommand\Cmin{C_{\text{min}}}
\newcommand\scal[3]{{\langle\,#1,#2\,\rangle_{#3}}}
\newcommand\lexp[2]{\kern\scriptspace\vphantom{#2}^{#1}\kern-\scriptspace#2}
\newcommand\card[1]{|#1|}
\newcommand\Chevie{{\tt CHEVIE}}
\newcommand\GAP{{\tt GAP}}
\newtheorem{theorem}[equation]{Theorem}
\newtheorem{lemma}[equation]{Lemma}
\def\begchevie{\begin{framed}\noindent}
\def\endchevie{\end{framed}}

\begin{document}
\title {The development version of the \Chevie\ package of \GAP3}

\author{Jean~Michel}
\address{UFR de Math\'ematiques,
Universit\'e Denis Diderot - Paris 7,
Bat. Sophie Germain, case 7012 -- 75013 Paris Cedex 13, France.}
\email{jmichel@math.jussieu.fr}
\maketitle
\section{Introduction}

The  published version 3 of the \Chevie\ package was released with the last
version  of \GAP3 in  1997. The paper  \cite{chevie} documents the state of
the  package in  1994; at  the time  the main  authors of  the package were
M.~Geck, G.~Hiss, F.~L\"ubeck, G.~Malle, and G.~Pfeiffer. The author started
working  on \Chevie\  in 1995,  motivated by  collaboration with M.~Geck on
determining  characters of  Iwahori-Hecke algebras.  For that  he developed
programs  to  compute  in  Artin-Tits  braid  groups,  and improved the way
reflection  subgroups  of  Coxeter  groups  were  handled.  Prompted by the
author's  interest in complex reflection  groups, he also implemented basic
routines  dealing with them  and the associated  cyclotomic Hecke algebras.
This is the extent of his work on \Chevie\ at the publication time in 1997.

Around  that time, the author also started a collaboration with F.~ L\"ubeck
to  implement arbitrary  reductive groups  and their  unipotent characters.
This collaborative work lasted until roughly 2000, at which time the author
became  the  main  developer  of  the  \Chevie\  package,  motivated by his
research  themes and  collaborations with  various people,  including among
others  D.~Bessis,  C.~Bonnaf\'e,  M.~Brou\'e,  M.  Chlouveraki,  F.~Digne,
J.~Gonzalez-Meneses, B.~Howlett, G.~Malle, I.~Marin and R.~Rouquier.

The  latest \Chevie~version, described  here, is available  on the author's
webpage  \cite{moi},  together  with  convenient  facilities  to install it
bundled with \GAP3.

Here  is  a  list  of  some  of  the  additional facilities provided by the
\Chevie\ package since 1997:

\begin{itemize}
\item  Affine Weyl groups and general Coxeter groups, and the corresponding
Hecke algebras with their Kazhdan-Lusztig bases and polynomials. The author
implemented  this via ``generic'' support  for Coxeter groups, writing code
in  terms  of  a  limited  set  of  primitives  \verb+FirstLeftDescending+,
\verb+LeftDescentSet+,  etc$\ldots$, definable for arbitrary Coxeter groups
whatever  the representation (for instance,  Affine Weyl group elements are
represented  as  matrices,  instead  of  the  permutations  used for finite
Coxeter groups).

\item  Kazhdan-Lusztig polynomials  and bases  for unequal  parameter Hecke
algebras.  Hecke  modules  on  Hecke  algebras  for general Coxeter groups,
including the bases defined by Deodhar and Soergel for these modules.

\item 
Reflection  cosets for  arbitrary complex  reflection groups, together with
their  automatic classification. This includes  the possibility of defining
Coxeter  Cosets corresponding to the ``very twisted'' Ree and Suzuki groups
of  Lie type.  Quite a  few methods  work now  for arbitrary finite complex
reflection  groups and cosets, such as type recognition (decomposition into
a  product  of  recognized  irreducible  groups), so routines for character
tables,  for  instance,  have  become  fast  and  easy  to  read using such
decompositions.

\item 
Complete  lists of representations for Hecke algebras of all finite Coxeter
groups, using Howlett's work for the big exceptional groups.
An almost complete list of representations for cyclotomic Hecke
algebras  for finite complex reflection groups;  most of this work was done
jointly  with G.~Malle, see \cite{MalleMichel}. Currently are missing a few
representations  for $G_{29}$  and most  representations for  groups in the
range  $G_{31}$ to $G_{34}$. Similarly,  there are partial character tables
for these cyclotomic Hecke algebras. The character table for the algebra of
$G_{29}$  is complete; the list of representations of the reflection groups
themselves are complete except for $G_{34}$.

\item
Complete  lists of polynomial invariants  for all finite complex reflection
groups.

\item
General  Garside and locally  Garside monoids. This  includes braid monoids
for  general Coxeter groups,  dual braid monoids  for finite Coxeter groups
and  well-generated  complex  reflection  groups.  There  are algorithms to
determine conjugacy sets and compute centralizers, implementing the work of
\cite{GG}.

\item
Semisimple  elements  of  reductive  groups,  including  the computation of
centralizers,  and  determining  the  list  of  isolated and quasi-isolated
classes.  

\item
Unipotent  characters  for  reductive  groups,  their Lusztig induction and
Lusztig's  Fourier  transform  have  been  implemented,  as  well  as $\cL$
functions  attached to Deligne-Lusztig  varieties. The above  has also been
implemented for ``Spetses'' attached to complex reflection groups.

\item
Unipotent  classes of  reductive groups  (including the  bad characteristic
case)  and the generalized Springer correspondence and Green functions. The
Maple  part of the \Chevie\ package dealing with Green functions has become
obsolete,  since  the  corresponding  computations  can now be handled more
conveniently within \GAP.

\item
Systematic  methods for formatting objects, in order to display them nicely
or export them in TeX form or in Maple form.

\item
Some support for posets.
\end{itemize}

One  may ask why this package was  developed in \GAP3, and not \GAP4, which
is the released version of \GAP\ since 1999; the reason is that the authors
of the package made considerable use of generic programming facilities (the
``type  system'') in \GAP3,  which is incompatible  with \GAP4; the package
represents a considerable investment of programming time, and the author is
not  yet willing to  stop his research  for one year,  which is the minimum
time  which  would  be  needed  to  port  the  package  to  \GAP4. The main
limitation  that \GAP3\ imposes is the limitation of memory to 2 gigabytes,
which  may some day be motivation enough for a port; but this port might as
well  be to  another system  like \verb+sage+  (see \cite{sage}), which can
already  use \Chevie\ through its \GAP3\ interface. We mention \cite{pycox}
where  Geck has  ported to  Python (and  substantially improved,  using new
mathematics)  the  \Chevie\  facilities  for Kazhdan-Lustig cells.

The  rest of this paper  introduces the package by  giving some examples of
its  use in  braid groups  and algebraic  groups. This  covers only a small
amount  of the  available facilities  in \Chevie.  However the  coverage is
detailed  in the sense that we give  complete \Chevie\ code for each of the
considered problems. The code in \Chevie\ has already been used intensively
in proving several important results, for instance in \cite{BM}, \cite{KM},
\ldots

\section{Braid groups}
We  begin by looking  at a conjecture  that Lusztig formulated in \cite{L0}
about  the (possibly  twisted) centralizer  of some  elements in  the braid
group.

Let  $(W,S)$  be  a  finite  Coxeter  system  and let $V$ be its reflection
representation,  a real vector  space on which  the elements of  $S$ act by
reflections.  We consider  the `twisted''  situation where  we are given in
addition  an automorphism $\sigma\in\GL(V)$ normalizing $W$ (this situation
is  motivated  by  the  study  of  non-split  reductive  groups).  Such  an
automorphism  is called  a {\em  diagram automorphism}  since we may choose
$\sigma$  up to  an inner  automorphism such  that it  stabilizes $S$. This
situation  defines a {\em Coxeter coset} $W\sigma$. A {\em conjugacy class}
of  $W\sigma$  is  a  $W$-orbit  for  the  conjugation action, and the {\em
centralizer}   of   $x\sigma\in   W\sigma$   is   the   set  $\{w\in  W\mid
wx\sigma=x\sigma  w\}$. All these  notions generalize straightforwardly the
case where $\sigma=\Id$.

A  {\em standard parabolic  subgroup} is a  subgroup of $W$  generated by a
subset  $J\subset  S$.  A  conjugacy  class  of $W$ (resp. of $W\sigma$) is
called  {\em elliptic} if it does not meet any proper subgroup $W_J$ (resp.
any proper subcoset $W_J\sigma$ where $\sigma(J)=J$).

If the presentation of $W$ as a Coxeter group is
$$W=\langle S\mid s^2=1, 
\underbrace{sts\ldots}_{m_{s,t}}=
  \underbrace{tst\ldots}_{m_{s,t}}\text{ for $s,t\in S$}\rangle$$
then the {\em Artin-Tits braid group} attached to $W$
is defined by the presentation
$B^+=\langle \bS\mid
\underbrace{\bs\bt\bs\ldots}_{m_{s,t}}=
  \underbrace{\bt\bs\bt\ldots}_{m_{s,t}}\text{ for $\bs,\bt\in \bS$}\rangle$,
where  $\bS$ is a  copy of $S$,  whose relations are  called the {\em braid
relations}.  There is an obvious quotient  map $B\to W$ since the relations
of  $B$ are relations  in $W$; Matsumoto's  lemma, stating that two reduced
expressions  for  an  element  of  $W$  can  be related by using only braid
relations,  implies that  there is  a well-defined  section of the quotient
which   maps  a  reduced  expression   $w=s_1\ldots  s_n$  to  the  product
$\bs_1\ldots\bs_n\in B$.

Since  $B$ is generated by  a copy $\bS$ of  $S$, the automorphism $\sigma$
extends  naturally to  $B$. The  following result  was proved  for the Weyl
groups of classical groups in \cite{L0}, and later given a general proof in
\cite{HN}.  In  the  meanwhile  the  author  could check it for exceptional
finite Coxeter groups using \Chevie.

\begin{theorem}\label{centbraid}[Lusztig, He-Nie] 
Let $w$ be an element of minimal length of
an  elliptic conjugacy class of $W$ (resp.  of $W\sigma$). Let $\bw$ be the
lift of $w$ to $B$. Then the map $C_B(\bw)\to C_W(w)$ (resp.
$C_B(\bw\sigma)\to C_W(w\sigma)$) is surjective.
\end{theorem}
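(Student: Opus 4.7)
The plan is to combine a combinatorial result on cyclic shifts in elliptic conjugacy classes with a direct lifting principle from $W$ to $B$. I sketch the untwisted case; the $\sigma$-twisted version is entirely analogous, with ordinary conjugation replaced by the $\sigma$-twisted conjugation $x\mapsto y\,x\,\sigma(y)^{-1}$ on $W$ (equivalently, ordinary conjugation on the coset $W\sigma$).

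The basic lifting lemma is the following: if $x,y\in W$ and $s\in S$ satisfy $y=sxs$ with $\ell(y)=\ell(x)$, then $\bs^{-1}\mathbf{x}\bs=\mathbf{y}$ in $B$, where boldface denotes the canonical braid lift. This is a short verification: equality of lengths forces $s$ to be a descent of $x$ on exactly one side, and writing $x=sx'$ (resp.\ $x=x's$) accordingly shows that both $\bs^{-1}\mathbf{x}\bs$ and $\mathbf{y}$ coincide with $\mathbf{x}'\bs$ (resp.\ $\bs\,\mathbf{x}'$). Thus equal-length cyclic shifts in $W$ lift canonically to conjugations in $B$.

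Next I would invoke the Geck--Pfeiffer--He theorem on minimal length elements: for $w$ of minimal length in an elliptic class $C$ and any $g\in C_W(w)$, there exists a reduced expression $g=s_1\cdots s_k$ such that every partial conjugate $w_i:=(s_i\cdots s_1)\,w\,(s_1\cdots s_i)$ lies in $\Cmin$, the set of minimal length elements of $C$. Each step $w_{i-1}\to w_i$ is then an equal-length cyclic shift by $s_i$, so iterating the lifting lemma gives $(\bs_1\cdots\bs_k)^{-1}\bw(\bs_1\cdots\bs_k)=\bw$; that is, $\bs_1\cdots\bs_k\in C_B(\bw)$ is a lift of $g$, which proves surjectivity.

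The main obstacle is the existence assertion in the previous paragraph: producing, for every conjugator, a reduced expression whose partial conjugates all stay inside $\Cmin$ is a substantial combinatorial fact about how cyclic shifts connect minimal length elements, and this is exactly where ellipticity is essential --- for a non-elliptic class a conjugator may be forced to leave $\Cmin$ at intermediate steps, and the conclusion of the theorem fails in general. The twisted case requires the analogous structure theorem for $\sigma$-twisted cyclic shifts $x\to_s sx\sigma(s)^{-1}$ on $\sigma$-twisted conjugacy classes in $W$; for the exceptional finite Coxeter groups, the author independently verifies the conclusion using \Chevie, by enumerating elliptic (twisted) classes, fixing minimal length representatives, computing generators of $C_W(w)$ and $C_W(w\sigma)$, and explicitly producing braid lifts of these generators that commute with $\bw$ (resp.\ $\bw\sigma$) in $B$.
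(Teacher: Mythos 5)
Your proposal necessarily takes a different route from the paper, because the paper does not actually prove Theorem \ref{centbraid}: it attributes the classical types to Lusztig \cite{L0} and the general proof to He--Nie \cite{HN}, and its own contribution is a machine verification for the exceptional types, shown in detail for the coset of type $\lexp 2E_6$. Note also that the paper's verification runs in the opposite direction to the one you describe in your last sentence: rather than producing braid lifts of generators of $C_W(w\sigma)$, it computes generators of $C_B(\bw\sigma)$ directly by the Gebhardt--Gonz\'alez-Meneses cyclic sliding algorithm (\texttt{CentralizerGenerators}), pushes them down to $W$ with \texttt{EltBraid}, and checks that the subgroup they generate has order $\card{C_W(w\sigma)}$; this direction is the algorithmically sensible one, since Garside theory makes braid centralizers computable, while there is no general procedure for lifting a prescribed element of $C_W(w\sigma)$. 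What you sketch is, in outline, the actual He--Nie argument, so the strategy is sound; but be aware that your ``main obstacle'' is invoked rather than proved, and it is precisely the hard content of \cite{HN} (the attribution should also be sharpened: Geck--Pfeiffer \cite{GP} prove the connectivity of $\Cmin$ under length-non-increasing cyclic shifts, while the centralizer statement for elliptic classes is He--Nie's).

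Two local points need repair. First, your lifting lemma is wrong as stated in one of its two cases: if $s$ is a right descent of $x$, say $x=x's$ with $\ell(x')=\ell(x)-1$ and $y=sxs=sx'$ of the same length, then $\bs^{-1}\mathbf{x}\bs=\bs^{-1}\mathbf{x}'\bs^2$, which is not $\bs\mathbf{x}'=\mathbf{y}$ in general; the correct identity there is $\bs\mathbf{x}\bs^{-1}=\mathbf{y}$. Consequently the braid conjugator is a product $\bs_1^{\epsilon_1}\cdots\bs_k^{\epsilon_k}$ with signs depending on the descent side at each step --- harmless for surjectivity, since $\bs^{+1}$ and $\bs^{-1}$ both map to $s$ in $W$, but the bookkeeping must be stated correctly. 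Second, the claim that equality of lengths forces $s$ to be a descent of $x$ on exactly one side is false: $sxs=x$ with $\ell(sx)=\ell(x)+1$ can occur (already for a generator commuting with $x$), and there neither of your identities applies literally, though Matsumoto's lemma yields $\bs\mathbf{x}=\mathbf{x}\bs$, so the step still lifts. Finally, demanding a \emph{reduced} expression for the conjugator $g$ is more than the argument needs (and more than is obviously available from \cite{HN}): any expression $g=s_1\cdots s_k$ whose partial conjugates stay in $\Cmin$ produces a lift, because the image in $W$ of the braid conjugator is $s_1\cdots s_k=g$ regardless of whether the word is reduced.
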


Actually, one may conjecture that there is a strong structural relationship
between  the groups $C_B(\bw\sigma)$ and $C_W(w\sigma)$: in most cases, the
second  is  a  complex  reflection  group  and  the  first  should  be  the
corresponding braid group.

We  will show the code  in \Chevie\ to check  theorem \ref{centbraid} for a
Coxeter coset of type $\lexp 2E_6$. We first construct the Coxeter group:

\begin{verbatim}
gap> W:=CoxeterGroup("E",6);;
gap> PrintDiagram(W);
E6      2
        |
1 - 3 - 4 - 5 - 6
\end{verbatim}
In  \GAP, the result of a command which ends with a double semicolon is not
printed.  
The command \verb+PrintDiagram+ shows the numbering the elements of $S$. We
now  specify  the  coset  by  giving  the permutation that the automorphism
$\sigma$ does on the elements of $S$.

\begin{verbatim}
gap> WF:=CoxeterCoset(W,(1,6)(3,5));
2E6
\end{verbatim}

We now need information on the conjugacy classes of the coset. There is
a Chevie function which does that for arbitrary complex reflection groups
or associated reflection cosets.

\begchevie\verb+ChevieClassInfo(W)+

This  function returns a record  containing information about the conjugacy
classes  of the finite reflection group or  coset $W$. If the argument is a
coset   $WF$,  where  $W$   denotes  the  reflection   group  and  $F$  the
automorphism,  the classes  are defined  as the  $W$-orbits on $WF$ for the
conjugation action.

The result is a record which contains among others the following fields:

  \verb+.classtext+:   words  in   the  generators   of  $W$  which  define
  representatives  of the  conjugacy classes.  These representatives are of
  minimal length and `very good'' in the sense of \cite{GM}.

  \verb+.classnames+:  names for the conjugacy classes.

  \verb+.classes+:  sizes of the conjugacy classes
\endchevie

\begin{verbatim}
gap> ChevieClassInfo(CoxeterGroup("A",3));
rec(
  classparams := [ [ [ 1, 1, 1, 1 ] ], [ [ 2, 1, 1 ] ], 
      [ [ 2, 2 ] ], [ [ 3, 1 ] ], [ [ 4 ] ] ],
  classnames := [ "1111", "211", "22", "31", "4" ],
  classtext := [ [  ], [ 1 ], [ 1, 3 ], [ 1, 2 ], [ 1, 3, 2 ] ],
  classes := [ 1, 6, 3, 8, 6 ],
  orders := [ 1, 2, 2, 3, 4 ] )
\end{verbatim}

It  has been proven  by Howlett that  a minimal length  representative of a
non-elliptic  conjugacy class lies actually  in a proper standard parabolic
subgroup.  Thus we can find the indices  of the elliptic classes by testing
if  the  minimal  word  for  a  representative  contains an element of each
$F$-orbit on $S$.
\begin{verbatim}
gap> cl:=ChevieClassInfo(WF).classtext;;
gap> elliptic:=Filtered([1..Length(cl)],i->
>   ForAll([[1,6],[3,5],[2],[4]],I->Intersection(cl[i],I)<>[]));
[ 1, 4, 5, 6, 7, 9, 10, 14, 15 ]
\end{verbatim}

We  now construct the lifts in the braid group of theses representatives by
using  the function \verb+Braid(W)+  which takes as  argument a sequence of
indices in $S$ and returns the corresponding element of the braid group:
\begin{verbatim}
gap> B:=Braid(W);
function ( arg ) ... end
gap> cl:=List(cl{elliptic},B);
[ w0, 124315436543, 1231431543165431, 23423465423456, 142314354231465431, 
  45423145, 4254234565423456, 1254, 123143 ]
\end{verbatim}
The  lift of the longest  element of $W$ is  printed in a particular way as
\verb+w0+  since it is the  {\em Garside element} of  the braid monoid. The
braid monoid is a {\em Garside monoid}, that is a cancellative monoid which
has  a special element called the Garside element such that its set of left
and  righ-divisors coincide,  generate the  monoid, and  form a lattice for
divisibility. We now use the function
\begchevie\verb+CentralizerGenerators(b[,F])+

The  element \verb+b+ should be an element of a Garside group. The function
returns  a list  of generators  of the  centralizers of \verb+b+, using the
algorithm of Gebhardt and Gonzalez-Meneses.

If  an  argument  \verb+F+  is  given  it  should  be the automorphism of a
reflection  coset attached to the same group to which the Garside monoid is
attached.  Then  the  \verb+F+-centralizer  is  computed,  defined  as  the
elements $\verb+x+$ such that $x b=b F(x)$.
\endchevie

In  the above, the  ``automorphism of a  reflection coset'' can be obtained
for  a coset  \verb+WF+ by  the call  \verb+Frobenius(WF)+, which returns a
\GAP\  function which  knows how  to apply  the automorphism $F$ to various
objects attached to $W$: words, elements, braids$\ldots$.
\begin{verbatim}
gap> F:=Frobenius(WF);
function ( arg ) ... end
gap> cc:=List(cl,x->CentralizerGenerators(x,F));
[ [ 6, 5, 4, 2, 3, 1 ], [ 65, 124315436543, (3)^-1.24315436543, 
    (4)^-1.1243654, (43)^-1.243654, 13, (5)^-1.12431543654, 
    (35)^-1.2431543654, (45)^-1.124354, (435)^-1.24354 ],
  [ 6, 5, 2431543654, 3, 1 ], 
  [ 4, 2, 342542345, 34265423145, (4354265431)^-1.314354265431, 
      (542345)^-1.65423145 ], 
  [ (5)^-1.1435, (456)^-1.154234565, 142314356, (4356)^-1.5423456 ], 
  [ 4, 13454231435426 ], 
  [ 42, 45426542314354265431, 5423, (2)^-1.5426542314354265431, 
      (3)^-1.56542314354265431, (3)^-1.543, (43)^-1.6542314354265431 ], 
  [ 12542346 ], [ 123465, 123142354654 ] ]
\end{verbatim}
In  the  above,  elements  of  the  braid  group  are  printed as ``reduced
fractions'' \verb+(a)^-1.b+ where \verb+a+ and \verb+b+ are elements of the
braid monoid which have no common left divisor. Some of the generating sets
can be simplified:
\begin{verbatim}
gap> cc{[2,4,5,6]}:=List(cc{[2,4,5,6]},ShrinkGarsideGeneratingSet);
[ [ 65, 13, (4)^-1.1243654 ], [ 4, 2, 342542345, 34265423145 ], 
  [ (5)^-1.1435, 142314356 ], [ 42, 5423, (3)^-1.56542314354265431 ]]
\end{verbatim}

It  is  now  straightforward  to  finish  the  computation.  To check that
$C_B(\bw\sigma)\to  C_W(w\sigma)$ is surjective, we compute the size of the
image, using the function \verb+EltBraid+ which computes the quotient $B\to
W$:
\begin{verbatim}
gap> List(cc,x->Size(Subgroup(W,List(x,EltBraid))));
[ 51840, 648, 216, 108, 96, 10, 72, 9, 12 ]
\end{verbatim}
And  we compare  with the  size of  $C_W(w\sigma)$ that  we can compute two
ways: using the field \verb+.classes+ of \verb+ChevieClassInfo+
\begin{verbatim}
gap> List(ChevieClassInfo(WF).classes{elliptic},x->Size(W)/x);
[ 51840, 648, 216, 108, 96, 10, 72, 9, 12 ]
\end{verbatim}
or asking directly for the size of the centralizer:
\begin{verbatim}
gap> List(ChevieClassInfo(WF).classtext{elliptic},
>   x->Size(Centralizer(W,EltWord(WF,x))));
[ 51840, 648, 216, 108, 96, 10, 72, 9, 12 ]
\end{verbatim}

\section{Representing reductive groups} 

We now describe how to work with reductive groups in \Chevie. We first look
at the case of connected groups; a connected reductive group $\bG$ over an
algebraically  closed field  $\BF$ is  determined up  to isomorphism by the
{\em  root  datum}  $(X(\bT),\Phi,  Y(\bT),\Phi^\vee)$  where  $\Phi\subset
X(\bT)$  are  the  roots  with  respect  to  the  maximal  torus  $\bT$ and
$\Phi^\vee\subset  Y(\bT)$ are  the corresponding  coroots. This determines
the Weyl group, a finite reflection group $W\subset\mathrm{GL}(Y(\bT))$.

In  \Chevie, to specify $\bG$,  we give an integral  matrix $R$ whose lines
represent the simple roots in terms of a basis of $X(\bT)$, and an integral
matrix  $R^\vee$ whose  lines represent  the simple  coroots in  terms of a
basis  of $Y(\bT)$. It is  assumed that the bases  of $X(\bT)$ and $Y(\bT)$
are  chosen  such  that  the  canonical  pairing  is  given  by $\langle x,
y\rangle_\bT=\sum_i x_i y_i$.

For convenience, two particular cases are implemented in \Chevie\ where the
user  just has to specify  the Coxeter type of  the Weyl group. If $\bG$ is
adjoint  then $R$ is the identity matrix  and $R^\vee$ is the Cartan matrix
of  the root system  given by $\{\alpha^\vee(\beta)\}_{\alpha,\beta}$ where
$\alpha^\vee$  (resp. $\beta$) runs  over the simple  coroots (resp. simple
roots).  If  $\bG$  is  semisimple  simply  connected,  then the dual group
$\bG^*$ is adjoint thus the situation is reversed: $R^\vee$ is the identity
matrix  and $R$  the transposed  of the  Cartan matrix.  In all  cases, the
function  we  use  constructs  a  particular  integral  representation of a
Coxeter group, so it is called \verb+CoxeterGroup+.

By default, the adjoint group is returned.
For instance, the group $\mathrm{PGL}_3$ is obtained by

\begin{verbatim}
gap> PGL:=CoxeterGroup("A",2);
CoxeterGroup("A",2)
gap> PGL.simpleRoots;
[ [ 1, 0 ], [ 0, 1 ] ]
gap> PGL.simpleCoroots;
[ [ 2, -1 ], [ -1, 2 ] ]
\end{verbatim}

To get the semisimple simply connected group $\SL_3$, the additional parameter
\verb+"sc"+ has to be given:

\begin{verbatim}
gap> SL:=CoxeterGroup("A",2,"sc");
CoxeterGroup("A",2,"sc")
gap> SL.simpleRoots;              
[ [ 2, -1 ], [ -1, 2 ] ]
gap> SL.simpleCoroots;            
[ [ 1, 0 ], [ 0, 1 ] ]
\end{verbatim}

To get $\mathrm{GL}_3$ we must use the general form by giving $R$ and $R^\vee$:

\begin{verbatim}
gap> GL := CoxeterGroup( [ [ -1, 1, 0], [ 0, -1, 1 ] ],
> [ [ -1, 1, 0], [ 0, -1, 1 ] ] );;
[ [ -1, 1, 0 ], [ 0, -1, 1 ] ]
gap> GL.simpleCoroots;
[ [ -1, 1, 0 ], [ 0, -1, 1 ] ]
\end{verbatim}

\begchevie{\verb+RootDatum(type[,index])+}

For   convenience,  there   is  also   a  function  \verb+RootDatum+  which
understands some familiar names for algebraic groups (like
\verb+"halfspin"+)  and does  the appropriate  call to \verb+CoxeterGroup+.
\endchevie
For instance, the above call is equivalent to

\begin{verbatim}
gap> GL := RootDatum("gl",3);
\end{verbatim}

\section{Computations with semisimple elements using \Chevie}

We  present  briefly  some  of  the  \Chevie~facilities  for computing with
semisimple  elements of finite order in reductive groups, with the programs
for checking some lemmas which were used in \cite{BM}.

Let  $\bS$ be  a torus  defined over  $\BF$. The map $\BF^\times\otimes_\BZ
Y(\bS)\to   \bS$  given   by  $x\otimes\lambda\mapsto   \lambda(x)$  is  an
isomorphism, where we identify $\bS$ to the group of its points over $\BF$.
If  $\BF=\BC$ we may  choose an isomorphism  between the elements of finite
order  in $\BF^\times$ (the roots of unity)  and $\BQ/\BZ$. If $\BF$ is an
algebraic closure of the finite field $\BF_p$, we may choose an isomorphism
$\BF^\times\simeq   (\BQ/\BZ)_{p'}$.  In   these  cases   we  get  thus  an
isomorphism  between $(\BQ/\BZ)\otimes_\BZ Y(\bS)$ and the points of finite
order  of  $\bS$  (resp.  $(\BQ/\BZ)_{p'}\otimes_\BZ Y(\bS)\xrightarrow\sim
\bS$).  Thus, if $\dim \bS=r$, an element of $\bS$ can be represented by an
element of $(\BQ/\BZ)^r$ as soon as we choose a basis of $Y(\bS)$.
If  $\bS$ is  a subtorus  of $\bT$,  then the  inclusion $\bS\subset\bT$ is
determined by giving a basis of the sublattice $Y(\bS)$ inside $Y(\bT)$.

These are the basic ideas used to represent semisimple elements in \Chevie. 

We  recall that a semisimple element $s$ of $\bG$ is {\em isolated} (resp.
{\em  quasi-isolated}) if $C_\bG(s)^0$ (resp. $C_\bG(s)$) does not lie in a
proper  Levi subgroup of $\bG$. We now  show how to use the \Chevie~package
to check the following lemma:

\begin{lemma}\label{calcul}
Let  $\bG$ be an adjoint  group of type $E_6$  and $\bM$ a Levi subgroup of
type  $A_2 \times A_2$.  If $s$ is  a semisimple element  of $\bM$ which is
quasi-isolated  in $\bM$  and in  $\bG$, there  exists $z  \in \bZ(\bM)$ of
order $3$ such that $s$ and $sz$ are not conjugate in $\bG$.
\end{lemma}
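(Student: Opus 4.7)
The plan is to reduce Lemma~\ref{calcul} to a finite \Chevie\ verification. First, I would realize $\bG$ as \verb+CoxeterGroup("E",6)+, which by default gives the adjoint isogeny as explained in the previous section, and take $\bM$ to be the standard Levi $\bL_J$ attached to a four-element subset $J\subset S$ whose induced subdiagram has type $A_2\times A_2$; for instance $J=\{1,3,5,6\}$ works for the numbering displayed earlier. Semisimple elements of $\bT$ of finite order will be represented, as described at the start of the present section, as vectors in $(\BQ/\BZ)^6$ with respect to the basis of $Y(\bT)$ dual to the simple roots.

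Next, I would enumerate the quasi-isolated semisimple classes of $\bM$ by invoking the \Chevie\ routine \verb+QuasiIsolatedRepresentatives(M)+; the resulting list is short, since each $A_2$ factor contributes only three isolated classes. Among the representatives $s$ so obtained, I would keep only those that are additionally quasi-isolated in $\bG$, either by checking directly that $C_\bG(s)$ is not contained in any proper Levi of $\bG$, or by matching against the analogous list \verb+QuasiIsolatedRepresentatives(G)+.

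For each surviving $s$, I would describe $\bZ(\bM)$ through its cocharacter lattice $Y(\bZ(\bM))$, which \Chevie\ returns as the intersection of the kernels of the roots of $\bM$ inside $Y(\bT)$. From this the $3$-torsion of $\bZ(\bM)$ is immediate: it contains the $(\BZ/3)^2$ coming from the two-dimensional subtorus $\bZ(\bM)^0$, together with any $3$-torsion contributed by the component group $\bZ(\bM)/\bZ(\bM)^0$. For each element $z$ of order $3$ in $\bZ(\bM)$, I would form the product $sz$ (the sum in $(\BQ/\BZ)^6$ after embedding $z$ into $\bT$ via $Y(\bZ(\bM))\hookrightarrow Y(\bT)$) and apply \Chevie's test for $W$-conjugacy of semisimple elements of $\bT$ to decide whether $s$ and $sz$ lie in the same $\bG$-conjugacy class. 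The lemma is verified by exhibiting, for each eligible $s$, at least one $z$ witnessing $sz\notin W\cdot s$.

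The main difficulty is essentially bookkeeping rather than mathematical: one must align $\bM$, the embedding $\bZ(\bM)\hookrightarrow\bT$, and the chosen elements $z$ in a single basis of $Y(\bT)$ so that the product $sz$ is unambiguous, and then translate \Chevie's conjugacy output into a statement about $\bG$-conjugacy. Once this calibration is in place the proof reduces to a small finite enumeration, well within \Chevie's reach.
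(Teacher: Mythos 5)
Your overall strategy --- reduce to a finite \Chevie\ enumeration inside a maximal torus, representing elements of finite order in $(\BQ/\BZ)^6$, and test $\bG$-conjugacy of $s$ and $sz$ via $W$-conjugacy --- is the same as the paper's, and your setup of $\bG$, $\bM$, and the $3$-torsion of $\bZ(\bM)$ is correct (in fact $\bZ(\bM)=\bZ(\bM)^\circ$ here because $\bG$ is adjoint, so the component-group contribution you hedge about is trivial). But your enumeration direction contains a genuine gap. Since $\bZ(\bM)^\circ$ is a two-dimensional torus and $C_\bM(sz)=C_\bM(s)$ for every $z\in\bZ(\bM)$, the quasi-isolated elements of $\bM$ form a union of full $\bZ(\bM)$-cosets: there are \emph{infinitely many} quasi-isolated semisimple classes of $\bM$, not a ``short list''; a routine such as \texttt{QuasiIsolatedRepresentatives}$(M)$ can at best return representatives modulo the central torus. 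The property ``quasi-isolated in $\bG$'' is \emph{not} constant on a coset $s_0\bZ(\bM)$ --- that is precisely the phenomenon the lemma is about --- so filtering your finite list of $\bM$-representatives by $\bG$-quasi-isolation is not a well-defined step: it may discard an entire family containing elements that are quasi-isolated in $\bG$ (when the chosen representative $s_0$ happens not to be), and it cannot produce the several distinct elements within one family over which the lemma actually quantifies, each of which may need a different witness $z$.

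The paper enumerates in the opposite direction exactly to obtain finiteness: it starts from \texttt{QuasiIsolatedRepresentatives}$(G)$ --- finitely many classes of finite-order elements, since $\bG$ is semisimple --- takes their orbits under $W$ (or, more efficiently, their images under representatives of the double cosets $C_\bG(s)\backslash\bG/\bM$, computed from \texttt{SemisimpleCentralizer}), filters by \texttt{IsQuasiIsolated}$(M,\cdot)$, and only then reduces to $\bM$-orbit representatives for the final test against the nine elements of order dividing $3$ in $\bZ(\bM)$. That reduction is legitimate because $z$ is central in $\bM$: if $s$ is $\bG$-conjugate to $sz$, then $msm^{-1}$ is $\bG$-conjugate to $msm^{-1}z=m(sz)m^{-1}$ --- a justification you also need, and leave implicit, when you pass to class representatives. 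Your approach could be repaired by intersecting each coset $s_0\bZ(\bM)$ of $\bM$-quasi-isolated elements with the finite set of elements quasi-isolated in $\bG$, but as written the filtering step fails.
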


\begin{proof}
We  first compute the list  of elements of order  3 of $\bZ(\bM)$. 
The first thing is to specify the Levi subgroup $\bM$.
\begin{verbatim}
gap> G:=CoxeterGroup("E",6);;PrintDiagram(G);
E6      2
        |
1 - 3 - 4 - 5 - 6
gap>M:=ReflectionSubgroup(G,[1,3,5,6]);
ReflectionSubgroup(CoxeterGroup("E",6), [ 1, 3, 5, 6 ])
\end{verbatim}
We now compute the torus $Z(\bM)^\circ=Z(\bM)$.

\begchevie\verb+AlgebraicCentre(G)+

This  function returns  a description  of the  centre $Z$  of the algebraic
group \verb+G+ (it may be a non-connected group, represented as an {\em extended
Coxeter group}, see below) as a record with the following fields\:

\verb+.Z0+:  A  basis  of  $Y(Z^0)$  (with  respect  to  the  canonical basis of
$Y(\bT)$)

\verb+.complement+:  A  basis  of  $Y(\bS)$,  a  complement  lattice to \verb+.Z0+ in
$Y(\bT)$ where $\bS$ is a complement torus to $Z^0$ in $\bT$.

\verb+.AZ+: representatives of $A(Z)\:=Z/Z^0$ given as a subgroup of $\bS$ (that
is, elements of $\BQ/\BZ\otimes Y(\bS)$).
\endchevie

\begin{verbatim}
gap> ZM:=AlgebraicCentre(M).Z0;
[ [ 0, 1, 0, -1, 0, 0 ], [ 0, 0, 0, 1, 0, 0 ] ]
\end{verbatim}

We now ask for the subgroup of elements of order 3 of $Z(\bM)^\circ$.

\begchevie\verb+SemisimpleSubgroup( G, V, n)+

Assuming  that  the  characteristic  of  $\BF$  does  not  divide $n$, this
function returns the subgroup of elements of order dividing \verb+n+ in the
subtorus  $\bS$ of  the maximal  torus $\bT$  of the algebraic group $\bG$,
where $\bS$ is represented by \verb+V+, an integral basis of the sublattice
$Y(\bS)$ of $Y(\bT)$.
\endchevie

\begin{verbatim}
gap> Z3:=SemisimpleSubgroup(G,ZM,3);
Group( <0,1/3,0,2/3,0,0>, <0,0,0,1/3,0,0> )
\end{verbatim}

The  above illustrates how semisimple elements are printed. The group \verb+Z3+
is  represented as  a subgroup  of $\bT$;  elements of  $\bT$, which  is of
dimension  6, are represented as lists of  6 elements of $\BQ/\BZ$ in angle
brackets; elements of $\BQ/\BZ$ are themselves represented as fractions $r$
such   that  $0\le  r  <1$.  The  subgroup   of  elements  of  order  3  of
$Z(\bM)^\circ$ is generated by 2 elements which are given above. We may ask
for the list of all elements of this group.

\begin{verbatim}
gap> Z3:=Elements(Z3);
[ <0,0,0,0,0,0>, <0,0,0,1/3,0,0>, <0,0,0,2/3,0,0>, 
  <0,1/3,0,2/3,0,0>, <0,1/3,0,0,0,0>, <0,1/3,0,1/3,0,0>, 
  <0,2/3,0,1/3,0,0>, <0,2/3,0,2/3,0,0>, <0,2/3,0,0,0,0> ]
\end{verbatim}
We now compute the list of elements quasi-isolated in both $\bG$ and $\bM$.
\begin{verbatim}
gap> reps:=QuasiIsolatedRepresentatives(G);
[ <0,0,0,0,0,0>, <0,0,0,0,1/2,0>, <0,0,0,1/3,0,0>, 
  <0,1/6,1/6,0,1/6,0>, <1/3,0,0,0,0,1/3> ]
\end{verbatim}
The  list  \verb+reps+  now  contains  representatives of $\bG$-orbits of
quasi-isolated elements. The algorithm to get these was described in
\cite{bonnafe quasi}. To get all the
quasi-isolated elements in $\bT$, we need to take the orbits under the Weyl
group:
\begin{verbatim}
gap> qi:=List(reps,s->Orbit(G,s));;
gap> List(qi,Length);
[ 1, 36, 80, 1080, 90 ]
\end{verbatim}
We have not displayed the orbits since they are quite large: the first orbit
is that of the identity element, which is trivial, but the fourth contains
$1080$ elements. We now filter
each orbit by the condition to be quasi-isolated also in $\bM$.
\begin{verbatim}
gap> qi:=List(qi,x->Filtered(x,y->IsQuasiIsolated(M,y)));;
gap> List(qi,Length);
[ 1, 3, 26, 36, 12 ]
gap> qi[2];
[ <0,0,0,1/2,0,0>, <0,1/2,0,1/2,0,0>, <0,1/2,0,0,0,0> ]
\end{verbatim}
There  is a way to  do the same computation  which does not need to compute
the large intermediate orbits under the Weyl group of $\bG$. The idea is to
compute  first the orbit of a semisimple quasi-isolated representative $s$
under  representatives  of  the  double cosets $C_\bG(s)\backslash\bG/\bM$,
which  are not too many,  then test for being  quasi-isolated in $\bM$, and
finally  take the orbits  under the Weyl  group of $\bM$.  

We use the following \Chevie\ function:

\begchevie{\verb+SemisimpleCentralizer( G, s)+}

This  function returns the  stabilizer in the  Weyl group of the semisimple
element \verb+s+ of the algebraic group \verb+G+. The result describes also
$C_\bG(s)$  since it is returned as  an extended reflection group, with the
reflection  group part  equal to  the Weyl  group of  $C_\bG^0(s)$, and the
diagram  automorphism part  being that  induced by $C_\bG(s)/C_\bG^0(s)$ on
$C_\bG^0(s)$.   The  extended  reflection  groups  represent  non-connected
reductive groups which are semi-direct product of their connected component
by a group of diagram automorphisms.
\endchevie

So starting with \verb+reps+ as above, we first compute:
\begin{verbatim}
ce:=List(reps,s->SemisimpleCentralizer(G,s));;ce[5];
Extended(ReflectionSubgroup(CoxeterGroup("E",6), 
[ 2, 3, 4, 5 ]),<(2,5,3)>)
gap> ce[5].group;
ReflectionSubgroup(CoxeterGroup("E",6), [ 2, 3, 4, 5 ])
gap> ce[5].permauts;
Group( ( 1,72, 6)( 2, 5, 3)( 7,71,11)( 8,10, 9)(12,70,16)
(13,14,15)(17,68,21)(18,69,20)(22,66,25)(23,67,65)(26,63,28)
(27,64,62)(29,59,31)(30,61,58)(32,57,53)(33,56,54)(34,52,48)
(35,47,43)(36,42,37)(38,41,39)(44,46,45)(49,50,51) )
\end{verbatim}
We show for the 5th element of \verb+reps+ how an extended reflection group
is  represented: it  contains a  reflection subgroup  of the  Weyl group of
$\bG$,   the   Weyl   group   of   $C_\bG^\circ(s)$,   obtained   above  as
\verb+ce[5].group+,  extended by the group of diagram automorphisms induced
on  it  by  $C_\bG(s)$,  obtained  above  as  \verb+ce[5].permauts+;  these
automorphisms  are  denoted  by  the  permutation  of  the  simple roots of
$C_\bG^\circ(s)$ they induce.

To get the whole Weyl group of $C_\bG(s)$ we need to combine these two pieces.
For this we define a \GAP\ function:
\begin{verbatim}
TotalGroup:=g->Subgroup(G,Concatenation(g.group.generators,
g.permauts.generators));
\end{verbatim}
We then compute representatives of the double cosets 
$C_\bG(s)\backslash\bG/\bM$, we apply them to \verb+reps+, keep
the ones still quasi-simple in $\bM$:
\begin{verbatim}
dreps:=List(ce,g->List(DoubleCosets(G,TotalGroup(g),M),
Representative));;
qi:=List([1..Length(reps)],i->List(dreps[i],w->reps[i]^w));;
qi:=List(qi,x->Filtered(x,y->IsQuasiIsolated(M,y)));
[ [ <0,0,0,0,0,0> ], [ <0,1/2,0,0,0,0>, <0,0,0,1/2,0,0>, 
  <0,1/2,0,1/2,0,0> ], [ <0,0,0,1/3,0,0>, <0,0,0,2/3,0,0>, 
  <1/3,2/3,1/3,0,2/3,2/3>, <1/3,1/3,1/3,0,2/3,2/3> ], 
  [ <1/3,1/2,1/3,1/2,2/3,2/3>, <1/3,1/2,1/3,0,2/3,2/3>, 
  <2/3,0,2/3,5/6,2/3,2/3> ], [ <1/3,0,1/3,0,1/3,1/3> ] ]
\end{verbatim}
We get a list such that the $\bM$-orbits of the sublists give the same list
as  before. We will  need this previous  list of all $\bG$-conjugates which
are  $\bM$-quasi-isolated, so if we did not  keep it we recompute this list
containing the $\bM$-orbits of the sublists by
\begin{verbatim}
qim:=List(qi,l->Union(List(l,s->Orbit(M,s))));;
\end{verbatim}
We  now ask, for each element $s$ of  each of our orbits, how many elements
$z$  of \verb+Z3+ are such  that $s$ and $sz$  are not $\bG$-conjugate. The
test  for being conjugate is that $sz$  is in the same $\bG$-orbit. We need
to make the test only for our representatives of the $\bM$-orbits, since if
$s$  is $\bG$-conjugate to  $sz$ with $z\in\bZ(\bM)$,  then $m s m^{-1}$ is
$\bG$-conjugate to $m s m^{-1}z=m sz m^{-1}$.
\begin{verbatim}
gap> List([1..Length(qi)],i->List(qi[i],s->Number(Z3,
  z->PositionProperty(qim,o->s*z in o)<>i)));
[ [ 8 ], [ 8, 8, 8 ], [ 7, 7, 3, 3 ], [ 6, 6, 6 ], [ 6 ] ]
\end{verbatim}
and we find indeed that there is always more than $0$ elements $z$ which work.
Note that the function \verb+PositionProperty+ returns \verb+false+ when no
element is found satisfying the given property, thus the number counted is
the $z$ such that $s$ and $sz$ are in a different orbit, as well as the cases
when $sz$ is not quasi-isolated in $\bG$.
\end{proof}

\section{Rational structures}
We now assume that $\BF$ is an algebraic closure of $\BF_p$ and that
$F$ is the Frobenius on $\bG$ corresponding to an $\BF_q$-structure where 
$q$ is a power of $p$. We assume that $\bG$ is an adjoint group
of type $E_7$ which in \Chevie\ has the following labelling of the simple
roots:
\begin{verbatim}
gap> G:=CoxeterGroup("E",7);;PrintDiagram(G);
E7      2
        |
1 - 3 - 4 - 5 - 6 - 7
\end{verbatim}
We are going to show the \Chevie\ code for the following lemma. 

\begin{lemma}\label{ordre 8}
Assume  $\bM$ is  an $F$-stable  Levi of  type $A_1  \times A_1 \times A_1$
corresponding to the roots 2, 3, 5 in the above diagram. If $q \in \{3,5\}$
and if $\card{\bZ(\bM)^F}=\Phi_1(q)^a\Phi_2(q)^b$ with $a$, $b \ge
1$, then $\bZ(\bM)^F$ contains an element of order $8$.
\end{lemma}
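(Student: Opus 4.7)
The plan is to enumerate all $F$-stable Levis of $\bG$ of type $A_1\times A_1\times A_1$ which are $\bG$-conjugate to the standard one $\bM_0$, to compute $\bZ(\bM)^F$ for each, to filter out those whose order is not of the form $\Phi_1(q)^a\Phi_2(q)^b$ with $a,b\ge1$ for $q\in\{3,5\}$, and to check on the remaining cases that the resulting finite abelian group contains an element of order~$8$. The computation is set up in \Chevie\ by \verb+G:=CoxeterGroup("E",7)+ and \verb+M:=ReflectionSubgroup(G,[2,3,5])+, then a basis of $Y(\bZ(\bM_0)^\circ)$ is obtained via \verb+AlgebraicCentre(M)+.

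Because $E_7$ has no non-trivial diagram automorphism, the $\bG^F$-conjugacy classes of $F$-stable Levis conjugate to $\bM_0$ are in bijection with the $F$-conjugacy classes of $N_W(W_{M_0})/W_{M_0}$; since $F$ acts as multiplication by $q$ on $Y(\bT)$, an element $w$ of this relative Weyl group induces the Frobenius $qw$ on $Y(\bZ(\bM)^\circ)$ (after twisting $\bM_0$ by $w$). I would enumerate representatives $w$ of these classes, and for each one compute the characteristic polynomial of $w$ acting on the $4$-dimensional space $Y(\bZ(\bM)^\circ)$. The condition $\card{\bZ(\bM)^F}=\Phi_1(q)^a\Phi_2(q)^b$ with $a,b\ge1$ and $a+b=4$ forces this characteristic polynomial to be $(X-1)^a(X+1)^b$, which already cuts the list down to very few $w$; the component group $A(\bZ(\bM))=\bZ(\bM)/\bZ(\bM)^\circ$, read off from \verb+AlgebraicCentre(M).AZ+, contributes an extra factor (at most $2^3$ since $\bM$ is of type $A_1^3$ inside the adjoint group) which must be tracked via the extended-Coxeter-group formalism recalled in the previous section.

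For each surviving twist $w$ and each $q\in\{3,5\}$, I then build $\bZ(\bM)^F$ explicitly as a subgroup of $(\BQ/\BZ)\otimes Y(\bT)$: using the basis of $Y(\bZ(\bM))$ inside $Y(\bT)$ returned by \verb+AlgebraicCentre+, I solve $(qw-1)v\equiv0\pmod{\BZ}$ for $v$ in $(\BQ/\BZ)\otimes Y(\bZ(\bM))$, which reduces to a standard Smith-normal-form computation and can be handled with \verb+SemisimpleSubgroup+ applied to an integer $n$ which is a multiple of $\card{\bZ(\bM)^F}$. I then call \verb+Elements+ on the resulting finite group and look for an element whose order in $(\BQ/\BZ)^7$ is exactly $8$, i.e.\ an element with $8v\in\BZ^7$ but $4v\notin\BZ^7$.

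The real obstacle is not the order-$8$ search itself, which is trivial once the group is in hand, but the bookkeeping: producing the twisted Frobenius $qw$ from the intrinsic data of \verb+ReflectionSubgroup(G,[2,3,5])+, keeping the lattice embedding $Y(\bZ(\bM))\hookrightarrow Y(\bT)$ straight so that \verb+SemisimpleSubgroup+ yields the correct subgroup of $\bT$, and incorporating the disconnected part of $\bZ(\bM)$ so that the computed $\card{\bZ(\bM)^F}$ matches exactly the $\Phi_1(q)^a\Phi_2(q)^b$ appearing in the hypothesis. Once these data structures are correctly plumbed, only a handful of $(w,q)$ pairs survive and the existence of an order-$8$ element can be read off directly from the elementary divisors of the corresponding $\bZ(\bM)^F$.
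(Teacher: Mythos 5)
Your proposal is correct and follows essentially the same route as the paper's computation: enumerate the twisted forms $W_I w$ of $\bM$ (the paper calls \texttt{Twistings(G,M)}), keep those where $\card{\bZ(\bM)^{wF}}$ involves only $\Phi_1$ and $\Phi_2$ factors with both present (the paper's \texttt{PhiFactors} test, equivalent to your characteristic-polynomial condition $(X-1)^a(X+1)^b$, including the need to discard the $(q+1)^4$ case with $a=0$), and then search the $wF$-fixed points for an element of order $8$. The only differences are implementational: the paper precomputes the $8$-torsion subgroup \texttt{Z8} of $\bZ(\bM)^\circ$ (4096 elements) and filters it under $wF$ rather than solving $(qw-1)v\equiv 0 \pmod{\BZ}$ by Smith normal form, and it can ignore the component group of $\bZ(\bM)$ altogether since the order-$8$ elements are already found inside $\bZ(\bM)^{\circ F}$.
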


In  \Chevie, to specify an $\BF_q$-structure  on a reductive group, we give
an  element $\phi\in\mathrm{GL}(Y(\bT))$ such that $F=q\phi$. We may choose
$\phi$  such that it  stabilizes the set  of simple roots.  Such an element
$\phi$ is determined by the coset $W\phi\subset\mathrm{GL}(Y(\bT))$, so the
structure which represents it in \Chevie\ is a Coxeter coset.

Further,  if $\bM'$ is  an $F$-stable $\bG$-conjugate  of the Levi subgroup
$\bM$,  the pair $(\bM',F)$  is isomorphic to  $(\bM,wF)$ for some $w\in W$
(determined  by $\bM'$ up  to  $F$-conjugacy).  So,  given  a Coxeter coset
$W\phi$,  an $F$-stable conjugate of a Levi  subgroup whose Weyl group is a
standard  parabolic subgroup $W_I$ is represented by a subcoset of the form
$W_I w\phi$, where $w\phi$ normalizes $W_I$.

To  check the lemma,  we first compute  the list of  elements of order 8 of
$\bZ(\bM)$, using the same commands as shown before.
\begin{verbatim}
gap> M:=ReflectionSubgroup(G,[2,5,7]);;
gap> ZM:=AlgebraicCentre(M);;
gap> Z8:=SemisimpleSubgroup(G,ZM.Z0,8);
Group( <1/8,0,0,0,0,0,0>, <0,0,1/8,7/8,0,1/8,0>, 
<0,0,0,1/8,0,7/8,0>, <0,0,0,0,0,1/8,0> )
gap> Z8:=Elements(Z8);;Length(Z8);
4096
\end{verbatim}
We  now ask for representatives of the $\bG^F$-classes of $F$-stable
$\bG$-conjugates   of   $\bM$. The group $\bG$ is split, so $\phi$ is trivial.
Thus an $F$-stable-conjugate of $\bM$ is represented by a coset of the
form $W_I w$. We first ask for  the list of all possible such
twistings of $\bM$:
\begin{verbatim}
gap> Mtwists:=Twistings(G,M);
[ A1<2>xA1<5>xA1<7>.(q-1)^4, 
  (A1xA1xA1)<2,5,7>.(q-1)^2*(q^2+q+1), 
  A1<2>xA1<5>xA1<7>.(q-1)^2*(q^2+q+1), 
  (A1xA1xA1)<2,5,7>.(q^2+q+1)^2, 
  (A1xA1xA1)<2,7,5>.(q-1)*(q+1)*(q^2+q+1), 
  (A1xA1xA1)<2,7,5>.(q-1)*(q+1)*(q^2-q+1), 
  ...
\end{verbatim}
In the above list (of 24 entries of which the first 6 are listed), brackets
around  pairs or triples of  $A_1$ denote an orbit  of the Frobenius on the
components. The element $w$ is not displayed, but the order
$\card{\bZ(\bM)^{wF}}$ is displayed. We want to keep the sublist where that
order  is a  product of  $\Phi_1(q)$ and  $\Phi_2(q)$. For  this we use the
function

\begchevie\verb+PhiFactors(WF)+

Let  \verb+WF+ be  a reflection  coset of  the form  $W\phi$, and let $V$ be the
vector  space on which $W$ acts as a reflection group. Let $f_1,\ldots,f_n$
be  the basic invariants of $W$ on  the symmetric algebra of $V$, chosen so
that  $\phi$ has the $f_i$  as eigenvectors. The corresponding eigenvalues,
listed  in the  same order  as \verb+ReflectionDegrees(W)+  (the degrees  of the
$f_i$) are called the {\em factors} of $\phi$ acting on $V$.
\endchevie

\begin{verbatim}
gap> Mtwists:=Filtered(Mtwists,MF->Set(PhiFactors(MF))=[-1,1]);
[ A1<2>xA1<5>xA1<7>.(q+1)^4, 
  A1<2>xA1<5>xA1<7>.(q-1)^2*(q+1)^2, 
  (A1xA1)<2,7>xA1<5>.(q-1)^3*(q+1), 
  A1<2>xA1<5>xA1<7>.(q-1)*(q+1)^3, 
  A1<2>xA1<5>xA1<7>.(q-1)^3*(q+1), 
  (A1xA1)<2,7>xA1<5>.(q-1)*(q+1)^3, 
  (A1xA1)<2,7>xA1<5>.(q-1)^2*(q+1)^2 ]
\end{verbatim}
Here  \verb+PhiFactors+ gives the  eigenvalues of $w$  on the invariants of
the Weyl group of $\bM$ acting on the symmetric algebra of
$X(\bT)\otimes\BC$.  The cases  we want  is when  these eigenvalues are all
equal  to  $1$  or  $-1$  (actually  this  gives  us  one extra case, where
$\card{\bZ(\bM)^{wF}}=(q+1)^4$   since  the   eigenvalues  on  the
complement of $\bZ(\bM)$ are always $1$; we will just have to disregard the
first entry of \verb+Mtwists+).

Now  for each of the remaining \verb+Mtwists+ we compute the fixed points of
$wF$ on \verb+Z8+, and look at the maximal order of an element in there. We
first  illustrate the necessary commands one by one on an example before 
showing a line of code which combines them.
\begin{verbatim}
gap> Z8F:=Filtered(Z8,s->Frobenius(Mtwists[3])(s)^3=s);
[ <0,0,0,0,0,0,0>, <0,0,0,1/4,0,1/4,0>, 
  <0,0,0,1/2,0,1/2,0>, <0,0,0,3/4,0,3/4,0>, 
  ...
  <3/4,0,3/4,1/8,0,3/8,0>, <3/4,0,3/4,3/8,0,5/8,0>, 
  <3/4,0,3/4,5/8,0,7/8,0>, <3/4,0,3/4,7/8,0,1/8,0> ]
\end{verbatim}
The   expression  \verb+Frobenius(Mtwists[3])+  returns  a  function  which
applies  to  its  argument  the  $w\phi$  associated  to the third twisting
\verb|(A1xA1)<2,7>xA1<5>.(q-1)^3*(q+1)|  of $\bM$. To compute $wF$ we still
have  to raise to the third power since  $q=3$. We give above 8 of the
32 entries obtained; we can see from the denominators that some elements in
the  resulting list of $wF$-stable elements of \verb+Z8+ are of order 8. We
can make this easier to see by writing a small function:
\begin{verbatim}
gap> OrderSemisimple:=s->Lcm(List(s.v,Denominator));
gap> List(Z8F,OrderSemisimple);
[ 1, 4, 2, 4, 2, 4, 2, 4, 8, 8, 8, 8, 8, 8, 8, 8, 
  2, 4, 2, 4, 2, 4, 2, 4, 8, 8, 8, 8, 8, 8, 8, 8 ]
gap> Set(last);
[ 1, 2, 4, 8 ]
\end{verbatim}
We now do the computation for all cosets in one command:
\begin{verbatim}
gap> List(Mtwists,MF->Set(List(Filtered(Z8,s->Frobenius(MF)(s)^3=s),
> OrderSemisimple)));
[ [ 1, 2, 4 ], [ 1, 2, 4, 8 ], [ 1, 2, 4, 8 ], 
  [ 1, 2, 4, 8 ], [ 1, 2, 4, 8 ], [ 1, 2, 4, 8 ], 
  [ 1, 2, 4, 8 ] ]
\end{verbatim}
and  we  see  that  indeed,  apart  from  the  first  twist which should be
disregarded,  for all  twists the  fixed points  of \verb+Z8+ still contain
elements of order 8.

\section{Lusztig's map from conjugacy classes in the Weyl group to conjugacy
classes in the reductive group}
Let $\bG$ be a connected reductive group over an algebraically closed field
$\BF$ of characteristic $p\ge 0$, and let $\cB$ be the variety of its Borel
subgroups. The $\bG$-orbits (for the diagonal action) on $\cB\times\cB$ are
naturally  indexed by the  Weyl group $W$  of $\bG$; we  denote $\cO_w$ the
orbit indexed by $w\in W$.

For   $\gamma$  a   conjugacy  class   of  $\bG$   and  $w\in  W$,  we  set
$$\cB_w^\gamma:=\{(g,\bB)\in\gamma\times\cB\mid (\bB,\lexp
g\bB)\in\cO_w\}.$$

For  $C$ a conjugacy class of $W$  and $\gamma$ a unipotent class of $\bG$,
Lusztig  denotes  $\gamma\vdash  C$  if  $\cB_w^\gamma\ne\emptyset$ for any
$w\in \Cmin$ where $\Cmin$ is the set of elements of $C$ of minimal length.

 In \cite{L1} Lusztig shows:
\begin{theorem}\label{cW to cG}If $p$ is good for $\bG$ then
\begin{itemize}
\item For $C$ a class of $W$, among the classes such that $\gamma\vdash C$,
there  exists  a  unique  class  $\gamma_C$  minimal  for the partial order
defined  by:
$$\text{$\gamma\le\gamma'$ if and only  if $\gamma$ lies in the Zariski
closure of $\gamma'$.}$$
\item The map $C\mapsto\gamma_C$ is surjective.
\end{itemize}
\end{theorem}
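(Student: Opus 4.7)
The plan is to prove both parts together by first reducing to elliptic conjugacy classes of $W$ and then analysing the geometry of the Bruhat cell $BwB$ attached to a good minimal length representative $w$. If $C$ is not elliptic, then by Howlett's theorem cited in the previous section any $w \in \Cmin$ lies in a proper standard parabolic $W_J$ and is elliptic there. Letting $\bL_J$ be the corresponding standard Levi of $\bG$, I would show that $\cB_w^\gamma \ne \emptyset$ in $\bG$ if and only if $\gamma$ is the $\bG$-saturation of some unipotent class $\gamma'$ of $\bL_J$ with $\cB_w^{\gamma'} \ne \emptyset$ in $\bL_J$; since Lusztig-Spaltenstein parabolic induction of unipotent classes respects the closure order, this reduces existence and uniqueness of $\gamma_C$ to the elliptic case, and surjectivity reduces as well because every unipotent class of $\bG$ is parabolic-induced from a distinguished class of some Levi.

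In the elliptic case I would pick a good $w \in \Cmin$ in the Geck-Michel sense and study the Jordan decomposition map on $BwB$. The essential input is that for good $w$ this cell contains a dense open on which $g = su$ has semisimple part $s$ lying in a specific maximal torus and unipotent part $u$ in a specific unipotent class $\gamma_C$ of $\bG$, so that $\cB_w^{\gamma_C} \ne \emptyset$ is automatic. Uniqueness of the minimum then follows from a one-parameter degeneration: if $\cB_w^\gamma$ is witnessed by a pair $(g_0, \bB)$, letting a cocharacter degenerate the semisimple part of $g_0$ sends $g_0$ to an element whose unipotent part is a limit of elements of $\gamma_C$, forcing $\gamma_C \subseteq \overline{\gamma}$.

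For surjectivity, I would construct a right inverse $\gamma \mapsto C(\gamma)$: choose $u \in \gamma$, a regular semisimple $s$ commuting with $u$, and a Borel $\bB$ containing $su$; then $(\bB, \lexp{su}{\bB}) \in \cO_w$ for a unique $w \in W$, and the $W$-conjugacy class $C(\gamma)$ of $w$ is independent of the choices. Verifying $\gamma_{C(\gamma)} = \gamma$ then amounts to checking that for generic $(u, s, \bB)$ the element $w$ produced this way lies in $C(\gamma)_{\min}$ and that $u$ is the generic unipotent part attached to $BwB$, so that $\gamma_{C(\gamma)} = \gamma$ by the previous paragraph.

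The main obstacle is the closure-order argument at the heart of the uniqueness step: one must show that the specialization inside $\overline{BwB}$ genuinely realises \emph{every} $\gamma$ with $\gamma \vdash C$ as a degeneration of $\gamma_C$, rather than only one such class. This is where the hypothesis that $p$ is good enters essentially: goodness ensures that Jordan decomposition is well-behaved under specialization and that the parameterization and closure order on unipotent classes are uniform across characteristics, so that the closure comparison can be transported to a characteristic-$0$ computation via Lie algebra specialization or a Luna-slice argument.
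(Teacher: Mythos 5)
Your proposal has several fatal gaps, the first of which destroys its central mechanism. Note that $\cB_w^\gamma\ne\emptyset$ is equivalent to $\gamma\cap BwB\ne\emptyset$ for the \emph{unipotent} class $\gamma$ itself, and the generic element of the irreducible variety $BwB$ is regular semisimple; hence on any dense open subset of $BwB$ the unipotent part of the Jordan decomposition is trivial, and the ``generic unipotent class $\gamma_C$ attached to $BwB$'' you postulate would always be $\{1\}$. No argument about Jordan parts of generic elements can detect which unipotent classes meet $BwB$. Your degeneration step also runs the wrong way: exhibiting an element as a limit of elements of $\gamma_C$ places it in $\overline{\gamma_C}$, which would yield an inclusion into $\overline{\gamma_C}$, not the required $\gamma_C\subseteq\overline{\gamma}$. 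The surjectivity construction is vacuous for $\gamma\ne\{1\}$: the centralizer of a regular semisimple $s$ is a maximal torus, so no nontrivial unipotent $u$ commutes with such an $s$; moreover, for a fixed element $g$ the class of the $w$ with $(\bB,\lexp g\bB)\in\cO_w$ depends essentially on the choice of $\bB$ --- that dependence is the whole content of the relation $\gamma\vdash C$, so there is no well-defined map $\gamma\mapsto C(\gamma)$ of the kind you describe. Finally, the ``iff'' in your parabolic reduction is false: for $C=\{1\}$, so $w=1\in W_J$ with $J=\emptyset$ and $\bL_J=\bT$, one has $\cB_1^\gamma\ne\emptyset$ for \emph{every} unipotent class $\gamma$, while $\bT$ carries only the trivial class. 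What is true --- and itself requires proof --- is only that the minimal class $\gamma_C$ is the Lusztig--Spaltenstein induction of the minimal class for the elliptic class of $w$ in $W_J$; the full set $\{\gamma\mid\gamma\vdash C\}$ is not controlled by the Levi.

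You should also be aware that no uniform geometric proof of this theorem along your lines is given (or known in the sources cited here): the theorem is Lusztig's, proved type-by-type in \cite{L1}, and what the paper supplies is the machine verification for the exceptional groups. The actual route is the counting formula (a), which expresses $\card{(\cB_w^\gamma)^F}$ as a polynomial in $q$ built from the Hecke-algebra character values $\Trace(T_w\mid E_q)$ and the Green-function data $\sum_{u\in\gamma^F}\Trace(u\mid R_{E'}^\bG)$. The theorem of Geck--Pfeiffer \cite{GP}, that the $T_w$ for $w\in\Cmin$ are conjugate in $\cH$, makes $\gamma\vdash C$ independent of $w\in\Cmin$ and computable class-by-class ($112$ classes in $W(E_8)$ instead of $696729600$ elements); nonemptiness of $\cB_w^\gamma$ becomes nonvanishing of an explicitly computable polynomial (the matrices $f(C,E')$ via \texttt{DeligneLusztigLefschetz} and $g(E',\gamma)$ via \texttt{ICCTable} and the Springer correspondence), and the existence and uniqueness of the minimal class and the surjectivity of $C\mapsto\gamma_C$ are then checked directly against the computed closure poset of unipotent classes. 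In particular the role of good $p$ is not the one you assign it (well-behavedness of Jordan decomposition under specialization); it enters through the validity of formula (a) and of the Springer-theoretic ingredients, and indeed the computation shows the conclusion persists in bad characteristic for the exceptional groups, something your proposed mechanism could not explain.
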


Lusztig  uses  \Chevie\  to  show  Theorem  \ref{cW  to cG} for exceptional
groups.  He was  not using  the development  version; see  also \cite{geck}
describing  the same computation.  The problem is  very easy to solve using
the  development version of \Chevie, as we will show by giving the complete
code  to solve it; Lusztig  notes in an addendum  in \cite{L1} that Theorem
\ref{cW  to cG} still  holds in bad  characteristic for exceptional groups.
Our code will also check this.

The  idea  of  the  computation  is  as  follows.  Assume that $\BF$ is an
algebraic closure of a finite prime field $\BF_p$. Let $F$ be the Frobenius
corresponding to a split rational structure of $\bG$ over $\BF_q$ where $q$
is  a power of  $p$. Lusztig gives  a formula for $\card{(\cB_w^\gamma)^F}$
when $\gamma$ is unipotent, which shows that $\card{(\cB_w^\gamma)^F}$ is a
polynomial in $q$. It is thus equivalent that $\cB_w^\gamma\ne\emptyset$ or
$\card{(\cB_w^\gamma)^F}\ne  0$ as  a polynomial,  which enables us to do the
computation since this polynomial can be readily computed in \Chevie.

To  give the formula, we need some more notation related to the permutation
module  $\Qlbar\cB^F$ for $\bG^F$  (where $\ell$ is  a prime different from
$p$;  we use $\Qlbar$ instead of $\BC$ as coefficients to make a connection
with  $\ell$-adic  cohomology,  see  below).  The Hecke algebra, defined as
$\cH:=\End_{\bG^F}\Qlbar\cB^F$  has  basis  $\{T_w\}_{w\in  W}$  defined by
$T_w(\bB)=\sum_{\{\bB'\mid  (\bB,\bB')\in\cO_w^F\}}\bB'$. The algebra $\cH$
specializes for $q\mapsto 1$ to $\Qlbar W$, inducing a bijection $E_q\mapsto E:
\Irr(\cH)\to\Irr(W)$   and   as   $\cH\times\bG^F$-module   we   have   the
decomposition    $\Qlbar\cB^F=\sum_{E\in\Irr    W}E_q\otimes\rho_E$   where
$\rho_E$  is an irreducible {\em  unipotent representation of the principal
series} of $\bG^F$.

Finally,  for $E$ a  representation of $W$,  let us define  the {\em almost
character} $R_E^\bG:=\card W^{-1}\sum_{w\in W}\Trace(w\mid
E)R_{\bT_w}^\bG(\Id)$  where $\bT_w$ is an  $F$-stable maximal torus of $\bG$
of  type  $w$  and  $R_{\bT_w}^\bG$  is Deligne-Lusztig induction; here
$R_{\bT_w}^\bG(\Id)$ is a virtual $\Qlbar\bG^F$-module.
Lusztig's formula is
$$\card{(\cB_w^\gamma)^F}=\sum_{E'\in\Irr(W)}
\Trace(T_w\mid \sum_{E\in\Irr W}\scal{\rho_E}{R_{E'}^\bG}{\bG^F}E_q)
\sum_{u\in\gamma^F}\Trace(u\mid R_{E'}^\bG).\eqno(a)$$
By  \cite[Theorem 1.1 (b)]{GP} the conjugacy class of $T_w$ in $\cH$ is the
same for all $w\in\Cmin$ thus by (a) the condition
$\cB_w^\gamma\ne\emptyset$  does not  depend on  the choice of $w\in\Cmin$;
this  makes  the  computation  doable  in  a  group like $W(E_8)$ which has
696729600 elements but only 112 conjugacy classes.

Now, we introduce some \Chevie\ functions which
can be used to compute the right-hand side of (a).
We decompose the computation in two steps, computing the functions 
$$ f(C,E'):=
\Trace(T_w\mid \sum_{E\in\Irr W}\scal{\rho_E}{R_{E'}^\bG}{\bG^F}E_q)
\text{ for $w\in\Cmin$,} $$
and $g(E',\gamma):=\sum_{u\in\gamma^F}\Trace(u\mid R_{E'}^\bG)$.

To compute $f$ we need:
\begchevie{\verb+UnipotentCharacters(G)+}

This  function returns a record  containing information about the unipotent
characters of the reductive group \verb+G+. Some important fields are:

\verb+charNames+: the list of names of the unipotent characters.

\verb+harishChandra+:  information  about  Harish-Chandra  series  of  unipotent
characters.   This  is  itself  a  list  of  records,  one  for  each  pair
$(\bL,\lambda)$  of  a  Levi  of  an  $F$-stable  parabolic  subgroup and a
cuspidal unipotent character of $\bL^F$.

\verb+families+:  information  about  Lusztig  families of unipotent characters.
These  families  correspond  to  blocks  of  the  matrix of scalar products
$\scal\rho{R_E^\bG}{\bG^F}$   where   $\rho$   runs   over   the  unipotent
characters. This matrix can also be obtained from the \verb+UnipotentCharacters+
record.
\endchevie

As an example we now show the result of TeXing the output of\hfill\break
\verb+FormatTeX(UnipotentCharacters(CoxeterGroup("G",2)));+

\medskip
Unipotent characters for $G_2$
{\tabskip 0mm\halign{\strut\vrule\kern 1mm\hfill$#$
\kern 1mm\vrule&\kern 1mm\hfill$#$\kern 1mm&\kern 1mm\hfill$#$
\kern 1mm&\kern 1mm\hfill$#$\kern 1mm&\kern 1mm\hfill$#$\kern 1mm\vrule\cr
\noalign{\hrule}
\gamma&\hbox{Deg($\gamma$)}&\hbox{FakeDegree}&\hbox{Fr($\gamma$)}&\hbox{Label}\cr
\noalign{\hrule}
\phi_{1,0}&1&1&1&\cr
\phi_{1,6}&q^6&q^6&1&\cr
\phi_{1,3}'&{1\over3}q\Phi_3\Phi_6&q^3&1&(1,\rho)\cr
\phi_{1,3}''&{1\over3}q\Phi_3\Phi_6&q^3&1&(g_3,1)\cr
\phi_{2,1}&{1\over6}q\Phi_2^2\Phi_3&q\Phi_8&1&(1,1)\cr
\phi_{2,2}&{1\over2}q\Phi_2^2\Phi_6&q^2\Phi_4&1&(g_2,1)\cr
G_2[-1]&{1\over2}q\Phi_1^2\Phi_3&0&-1&(g_2,\varepsilon)\cr
G_2[1]&{1\over6}q\Phi_1^2\Phi_6&0&1&(1,\varepsilon)\cr
G_2[\zeta_3]&{1\over3}q\Phi_1^2\Phi_2^2&0&\zeta_3&(g_3,\zeta_3)\cr
G_2[\zeta_3^2]&{1\over3}q\Phi_1^2\Phi_2^2&0&\zeta_3^2&(g_3,\zeta_3^2)\cr
\noalign{\hrule}}
}

\medskip
The  \verb+FakeDegree+  is  defined  for  a unipotent character $\rho_E$ as
$R_E^\bG(1)$  and  is  $0$  outside  of  the  principal  series. The column
$\hbox{Fr}(\gamma)$  contains  the  root  of  unity  part  of the Frobenius
eigenvalue  attached  to  the  character  $\gamma$  when  it appears in the
cohomology  of a Deligne-Lusztig  variety. Finally the  ``Label'' refers to
the labelling by Lusztig families. 

To compute $f$ we use also:

\begchevie{\verb+DeligneLusztigLefschetz(h)+}

By  \cite[III,  1.3  and  2.3]{DM},  the  class  function  on $\bG^F$ which
associates  to $g\in\bG^F$  the number  of fixed  points of  $g F^m$ on the
Deligne-Lusztig  variety associated  to the  element $w\in  W$ has, for $m$
sufficiently    divisible,    the    form    $g\mapsto   \sum_{E\in\Irr(W)}
\Trace(T_w\mid  E_{q^m}) R_E^\bG(g)$.  This expression  is called  the {\em
Lefschetz character} associated to the Deligne-Lusztig variety; since $\cH$
splits  over  $\BQ[\sqrt  q]$,  it  is  a  sum of unipotent characters with
coefficients in $\BQ[\sqrt q]$.

The  function \verb+DeligneLusztigLefschetz+  takes as  argument a Hecke element
\verb+h+ and returns the  corresponding Lefschetz character (the definition
is extended from $T_w$ to any \verb+h+$\in\cH$ by linearity).
\endchevie

Note that, since
$\scal{\rho_E}{R_{E'}^\bG}{\bG^F}=\scal{\rho_{E'}}{R_E^\bG}{\bG^F}$
(see  \cite[III, 3.5(iii)]{DM}), the function  $f(C,E')$ is the coefficient
of  \verb+DeligneLusztigLefschetz+$(T_w)$  on  $\rho_{E'}$.  

Here  is a function which  takes as arguments $W$  and $q$, and returns the
matrix of $f(C,E')$, with rows indexed by $C$ and columns indexed by $E'$.

\begin{verbatim}
f:=function(W,q)
  return List(ChevieClassInfo(W).classtext,
    function(w)local Tw,psuc;
      Tw:=Basis(Hecke(W,q),"T")(w);
      psuc:=UnipotentCharacters(W).harishChandra[1].charNumbers;
      return DeligneLusztigLefschetz(Tw).v{psuc};
    end);
end;
\end{verbatim}

We   use   here   that   \verb+ChevieClassInfo(W).classtext+   contains   a
representative of $C$ in $\Cmin$. The variable \verb+psuc+ which stands for
``principal   series  unipotent  characters''  holds  the  indices  of  the
unipotent  characters  of  the  principal  series  $\rho_E$ amongst all the
unipotent   characters.  The   principal  series   is  the   first  of  the
Harish-Chandra series described by the list
\verb+UnipotentCharacters(W).harishChandra+.  The  field  \verb+v+  of  the
\verb+DeligneLusztigLefschetz(Tw)+ record is a list of coefficients on each
unipotent character.

The  function \verb+f+  takes on  a 2Ghz  computer 3  seconds for  $E_7$, and 23
seconds for $E_8$.

To  write  the  code  for  the  function  $g$, we will need to describe the
\Chevie\   functions   dealing   with   unipotent  elements,  the  Springer
correspondence, and the Green functions.

Let  $(\gamma,\varphi)$ run  over the  pairs where  $\gamma$ is a unipotent
class  of $\bG$, and  $\varphi$ is a  character of the  group of components
$A(u):=C_\bG(u)/C_\bG^0(u)$  of  $C_\bG(u)$  for  $u\in\gamma$.  Such pairs
describe  $\bG$-equivariant local  systems on  $\gamma$. The  {\em Springer
correspondence}  is  an  injective  map  from  $\Irr(W)$ to the set of such
pairs.  It is not  surjective in general,  but all the pairs $(\gamma,\Id)$
are  in the  image. The  {\em generalized  Springer Correspondence} extends
this to a bijection. This time the source is the union of
$\Irr(W_\bG(\bL))$,  where  $\bL$  runs  over  Levi  subgroups  (taken up to
conjugacy)  which admit a {\em cuspidal} local system $(\lambda,\phi)$; the
group  $W_\bG(\bL):=N_\bG(\bL)/\bL=N_\bG(\bL,\phi)/\bL$ is  a Coxeter group
for  such Levis. The image of each of  these sets is called a {\em Springer
series}.  We will  just need  the ordinary  Springer correspondence  in our
computation  (whose image is  called the {\em  principal Springer series}),
but the \Chevie\ functions describe the generalized correspondence.

\begchevie{\verb+UnipotentClasses(G[,p])+}

This  function returns a record  containing information about the unipotent
classes of the algebraic group \verb+G+ in characteristic \verb+p+ (if omitted, \verb+p+ is
assumed  to be any good characteristic for $\bG$). Some important fields of
the record are:

\verb+.orderClasses+:  a list describing the partial order on unipotent classes.
The  poset is described by its Hasse diagram, that is the \verb+i+-th element of
the  list is the list  of the indices \verb+j+  of the classes immediately above
the  \verb+i+-th class. That  is \verb+.orderclasses[i]+ contains  \verb+j+ if ${\overline
\gamma}_j\supsetneq  \gamma_i$ and there  is no class  $\gamma_k$ such that
${\overline  \gamma}_j\supsetneq {\overline \gamma}_k \supsetneq \gamma_i$.

\verb+.classes+: a list of records holding information for each unipotent class.
In particular the field \verb+.Au+ holds the group $A(u)$.

\verb+.springerSeries+:  a list of  records, each of  which describes a Springer
series  of $\bG$. The main  field in such a  record is \verb+.locsys+, a list of
length  \verb+NrConjugacyClasses(WGL)+,  where  \verb+WGL+  is the group $W_\bG(\bL)$
associated  to the  series, holding  in \verb+i+-th  position a  pair describing
which  local system corresponds to the \verb+i+-th character of \verb+WGL+. The first
element  of the pair is the index of the concerned unipotent class $u$, and
the second is the index of the corresponding character of $A(u)$.

The  record returned by \verb+UnipotentClasses+  depends on  the isogeny  type of
$\bG$ and on the characteristic. We give some examples.

First, here is what gives TeXing the output of
\hfill\break \verb+FormatTeX(UnipotentClasses(CoxeterGroup("G",2)))+:

\noindent$1{<}A_1{<}\tilde A_1{<}G_2(a_1){<}G_2$\hfill\break
{\tabskip 0mm\halign{\strut\vrule\kern 1mm\hfill$#$
\kern 1mm\vrule&\kern 1mm\hfill$#$\kern 1mm&\kern 1mm\hfill$#$
\kern 1mm&\kern 1mm\hfill$#$\kern 1mm&\kern 1mm\hfill$#$
\kern 1mm&\kern 1mm\hfill$#$\kern 1mm\vrule\cr
\noalign{\hrule}
u&\hbox{diagram}&\dim{\mathcal B}_u&A(u)&G_2()&.(G_2)\cr
\noalign{\hrule}
G_2&22&0&.&\phi_{1,0}&\cr
G_2(a_1)&20&1&A_2&21:\phi_{1,3}'\kern 0.8em 3:\phi_{2,1}&111:\cr
\tilde A_1&01&2&.&\phi_{2,2}&\cr
A_1&10&3&.&\phi_{1,3}''&\cr
1&00&6&.&\phi_{1,6}&\cr
\noalign{\hrule}}
}
\medskip

The  first line  describes the  partial order  of the unipotent classes. In
that  line  and  the  first  column  the  traditional name of the unipotent
classes  are  used.  The  columns  ``diagram''  shows the Dynkin-Richardson
diagram  which we do not explain here.  The next column shows the dimension
of  the  variety  $\cB_u:=\{\bB\in\cB\mid  \bB\owns  u\}$. The next columns
describe   the  Springer  correspondence.  Only  the  second  class  has  a
non-trivial $A(u)$, equal to the Coxeter group of type $A_2$ (the symmetric
group  on 3 elements); this group has three irreducible characters, indexed
by  the partitions $3$, $21$ and $111$. The first two characters are in the
ordinary   Springer  correspondence,   corresponding  to   the  irreducible
characters  $\phi'_{1,3}$ and $\phi_{2,1}$ of  $G_2$. The third corresponds
to  a cuspidal local system, so is a Springer series by itself. The head of
a  column  describing  a  Springer  series  attached  to  the cuspidal pair
$(\bL,(\lambda,\phi))$  is of the form \verb+A(B)+ where \verb+A+ describes
$W_\bG(\bL)$ and \verb+B+ describes $\bL$.

The result is different in characteristic three
\hfill\break \verb+FormatTeX(UnipotentClasses(CoxeterGroup("G",2),3))+:

\noindent$1{<}A_1,(\tilde A_1)_3{<}\tilde A_1{<}G_2(a_1){<}G_2$\hfill\break
{\tabskip 0mm\halign{\strut\vrule\kern 1mm\hfill$#$
\kern 1mm\vrule&\kern 1mm\hfill$#$\kern 1mm&\kern 1mm\hfill$#$
\kern 1mm&\kern 1mm\hfill$#$\kern 1mm&\kern 1mm\hfill$#$
\kern 1mm&\kern 1mm\hfill$#$\kern 1mm&\kern 1mm\hfill$#$\kern 1mm\vrule\cr
\noalign{\hrule}
u&\dim{\mathcal B}_u&A(u)&G_2()&.(G_2)&.(G_2)&.(G_2)\cr
\noalign{\hrule}
G_2&0&Z_{3}&1:\phi_{1,0}&&\zeta_3:&\zeta_3^2:\cr
G_2(a_1)&1&A_1&2:\phi_{2,1}&11:&&\cr
\tilde A_1&2&.&\phi_{2,2}&&&\cr
A_1&3&.&\phi_{1,3}''&&&\cr
(\tilde A_1)_3&3&.&\phi_{1,3}'&&&\cr
1&6&.&\phi_{1,6}&&&\cr
\noalign{\hrule}}
}
\endchevie

As another example we show the difference between $\PGL_n$ and $\SL_n$:
\hfill\break \verb+FormatTeX(UnipotentClasses(RootDatum("pgl",4)))+:

\noindent$1111{<}211{<}22{<}31{<}4$\hfill\break
{\tabskip 0mm\halign{\strut\vrule\kern 1mm\hfill$#$
\kern 1mm\vrule&\kern 1mm\hfill$#$\kern 1mm&\kern 1mm\hfill$#$
\kern 1mm&\kern 1mm\hfill$#$\kern 1mm&\kern 1mm\hfill$#$\kern 1mm\vrule\cr
\noalign{\hrule}
u&\hbox{diagram}&\dim{\mathcal B}_u&A(u)&A_3()\cr
\noalign{\hrule}
4&222&0&.&4\cr
31&202&1&.&31\cr
22&020&2&.&22\cr
211&101&3&.&211\cr
1111&000&6&.&1111\cr
\noalign{\hrule}}
}
\medskip

\verb+FormatTeX(UnipotentClasses(RootDatum("sl",4)))+:

\noindent$1111{<}211{<}22{<}31{<}4$\hfill\break
{\tabskip 0mm\halign{\strut\vrule\kern 1mm\hfill$#$\kern 1mm\vrule&
\kern 1mm\hfill$#$\kern 1mm&\kern 1mm\hfill$#$\kern 1mm&\kern 1mm\hfill$#$
\kern 1mm&\kern 1mm\hfill$#$\kern 1mm&\kern 1mm\hfill$#$\kern 1mm&
\kern 1mm\hfill$#$\kern 1mm&\kern 1mm\hfill$#$\kern 1mm\vrule\cr
\noalign{\hrule}
u&\hbox{diagram}&\dim{\mathcal B}_u&A(u)&A_3()&A_1(A_1^2)/-1&.(A_3)/i&
.(A_3)/-i\cr
\noalign{\hrule}
4&222&0&Z_{4}&1:4&-1:2&i:&-i:\cr
31&202&1&.&31&&&\cr
22&020&2&A_1&2:22&11:11&&\cr
211&101&3&.&211&&&\cr
1111&000&6&.&1111&&&\cr
\noalign{\hrule}}
}
\medskip
In  $\SL_4$ there  are more  local systems  on the  same classes.  Some are
cuspidal,  and  some  are  in  a  Springer  series  corresponding to a Levi
subgroup  of type $A_1\times A_1$. In  the column heads for Springer series,
we  see a third parameter after a \verb+/+ describing a character of $Z\bG$
attached to the series.

We need one more function to write the code for $g$:
\begchevie{\verb+ICCTable(uc[,seriesNo[,q]])+}

The  first  argument  is  a  record  describing  the unipotent classes of a
reductive  group  in  some  characteristic.  \verb+ICCTable+  gives the table of
decompositions  of  the  functions  $  X_{\gamma,\varphi}$  in terms of the
functions   $Y_{\gamma,\varphi}$,   where   $Y_{\gamma,\varphi}$   is   the
characteristic   function  of  the   local  system  $(\gamma,\varphi)$  and
$X_{\gamma,\varphi}$  is the  characteristic function  of the corresponding
intersection cohomology complex.

Since  the coefficient of $X_{\gamma,\varphi}$ on $Y_{\gamma',\varphi'}$ is
$0$  if  $(\gamma,\varphi)$  and  $(\gamma',\varphi')$  are not in the same
Springer  series, the table given is for  a single Springer series, the one
whose  number is given by the argument \verb+seriesNo+ (if omitted this defaults
to   \verb+seriesNo=1+  which  is  the   principal  series).  The  decomposition
multiplicities  are graded,  and are  given as  polynomials in one variable
(specified  by the argument \verb+q+; if not given \verb+Indeterminate(Rationals)+ is
assumed).

The function \verb+ICCTable+ returns a record with various pieces of information
which can help further computations. Some important fields are:

\verb+.scalar+:  the main result, the table of multiplicities of the $X_\psi$ on
the $Y_\chi$, where both $\psi$ and $\chi$ run over $\Irr(W_\bG(\bL))$.

\verb+.dimBu+:  The  list  of  $\dim\cB_u$  for  each local system in the chosen
Springer series.

\verb+.L+:  the matrix of unnormalized scalar products of the functions $Y_\psi$
with  themselves, that is, if  $F$ is a Frobenius  corresponding to a split
$\BF_q$-structure   on  $\bG$,   the  $(\phi,\psi)$   entry  is   equal  to
$\sum_{g\in\bG^F}   Y_\phi(g)   \overline{Y_\psi(g)}$.   This   is  thus  a
symmetric,  block-diagonal matrix  where the  diagonal blocks correspond to
geometric unipotent conjugacy classes.
\endchevie

The  relationship of \verb+ICCTable(uc)+  with characters of  $\bG^F$ is as
follows:  let $X_{(\gamma,\varphi)}$ be a  characteristic function as above
where  $(\gamma,\varphi)$ is in the ordinary Springer correspondence, image
of  $E'\in\Irr(W)$; we  also write  $X_{E'}$ for $X_{(\gamma,\varphi)}$ and
write  $b_{E'}$  for  $\dim\cB_u$.  Then  the  restriction  of  the  almost
character  $R_{E'}^\bG$ to the  unipotent elements is  equal to $q^{b_{E'}}
X_{E'}$. Thus
$g(E',\gamma)=q^{b_{E'}}\sum_{g\in\bG^F}X_{E'}(g)Y_{(\gamma,\Id)}(g)$, thus
if   $(\gamma,\Id)$   is   parameterized   by   $\psi\in\Irr(W)$,  we  have
$g(E',\gamma)=  q^{d_{E'}}P_{E',\psi}L_\psi$  where  $P_{E',\psi}$  is  the
coefficient of $X_{E'}$ on $Y_\psi$ and $L_\psi=
\sum_{g\in\bG^F}   Y_\psi(g)   \overline{Y_\psi(g)}$.

Here is a \Chevie\ function which takes as arguments $W$, the indeterminate
$q$   and  the  characteristic  $p$  and  computes  the  matrix  of  values
$g(E',\gamma)$,  with rows indexed by $E'$ and columns indexed by unipotent
classes $\gamma$.

\begin{verbatim}
g:=function(W,q,p)local uc,t,triv,d;
  uc:=UnipotentClasses(W,p);
  t:=ICCTable(uc,1,q);
  triv:=List([1..Size(uc)],i->
    Position(uc.springerSeries[1].locsys,
      [i,PositionId(uc.classes[i].Au)]));
  d:=List(t.dimBu,i->q^i);
  t:=Zip(t.scalar{triv},DiagonalOfMat(t.L{triv}{triv}),
    function(a,b)return a*b;end);
  return Zip(TransposedMat(t),d,function(a,b)return a*b;end);
end;
\end{verbatim}

The  list \verb+triv+ records, for each  unipotent conjugacy class, the position
in the ordinary Springer correspondence (that is, in the list of irreducible
characters of the Weyl group) of the trivial local system in that class. As is
traditional in functional languages, \verb+Zip(l,v,f)+ takes as arguments two
lists \verb+l+ and \verb+v+ and a function \verb+f+ and returns a list
\verb+z+ such that \verb+z[i]=f(l[i],v[i])+.

We put what we have done so far together and we get:

\begin{verbatim}
vdash:=function(W,p)local q;q:=X(Rationals)^2;
  return List(f(W,q)*g(W,q,p),x->Filtered([1..Length(x)],
    i->x[i]<>0*X(Rationals)));
end;
\end{verbatim}

The  function  \verb+vdash+  returns  a  list  indexed  by  the  classes of
\verb+W+,  which  contains  for  the  \verb+i+-th  class  $C_i$ the list of
indices $j$ of unipotent classes $\gamma_j$ such that $\gamma_j\vdash C_i$.
Note  that we use \verb+X(Rationals)^2+ as  a variable since to compute the
character  table of the Hecke  algebras of types $E_7$  and $E_8$ (which is
used  by \verb+DeligneLusztigLefschetz+)  \Chevie\ must  be able to extract
square roots of the indeterminate.

It  remains to check Lusztig's  theorem \ref{cW to cG},  and to compute the
map  $C\mapsto\gamma_C$. The following  function returns a  list indexed by
the conjugacy classes of \verb+W+ whose element indexed by $C$ is the index
of $\gamma_C$.

\begin{verbatim}
gamma:=function(W,p)local lt;
  lt:=Incidence(Poset(UnipotentClasses(W,p)));
  return List(vdash(W,p), function(l)local classes;
    classes:=Filtered(l,x->ForAll(l,y->lt[x][y]));
    if Length(classes)<>1 then Error("no minimal class \gamma_C");fi;
    return classes[1];
  end);
end;
\end{verbatim}

We  have used \Chevie\ functions for posets: \verb+Poset(UnipotentClasses(W,p))+
returns  the poset  defined by  Zariski closure  of unipotent  classes as a
\Chevie\  object,  and  \verb+Incidence+  returns  the  incidence matrix of this
poset,  that  is,  \verb+lt[i][j]+  is  \verb+true+  if  and only if $\gamma_i\subset
\overline\gamma_j$.  

The following function finally checks that $C\mapsto\gamma_C$ is surjective
and displays this map.

\begin{verbatim}
LusztigMap:=function(W,p)local g,uc,i;
  g:=gamma(W,p);
  uc:=UnipotentClasses(W,p);
  if Set(g)<>[1..Size(uc)] then Error("not surjective");fi;
  for i in [1..Size(uc)] do
    Print(Join(List(Positions(g,i),j->ClassName(W,j)),", "),
          " -> ",ClassName(uc,i),"\n");
  od;
end;
\end{verbatim}

Here is the result for $G_2$ in characteristics $0$ and $3$:

\begin{verbatim}
gap> LusztigMap(CoxeterGroup("G",2),0);
A0 -> 1
A1 -> A1
~A1, A1+~A1 -> ~A1
A2 -> G2(a1)
G2 -> G2
gap> LusztigMap(CoxeterGroup("G",2),3);
A0 -> 1
A1 -> A1
A1+~A1 -> ~A1
A2 -> G2(a1)
G2 -> G2
~A1 -> (~A1)3
\end{verbatim}

The  analogous computation takes 2 minutes on  a 2 Ghz computer for a given
characteristic in type $E_8$.

\section{A generalization}
In  \cite{L2},  Lusztig  generalizes  the  above  results to the case where
$\gamma$ is not necessarily unipotent. We now assume $\bG$ simply connected
to  ensure that the  centralizers of semisimple  elements are connected. 

We will see in formula (b) below that the condition
$\cB_w^\gamma\ne\emptyset$   is   still   independent   of  the  choice  of
$w\in\Cmin$.  For a given class $C\subset W$, Lusztig denotes $\delta_C$ the
minimum  dimension of a class $\gamma$ such that $\cB_w^\gamma\ne\emptyset$
for  $w\in\Cmin$. It is clear from theorem \ref{cW to cG} that $\delta_C\le
\dim \gamma_C$. Finally Lusztig denotes
$$\fbox{$\bG_C$}=\bigcup_{\{\gamma\mid \cB_w^\gamma\ne\emptyset\text{ and }
\dim\gamma=\delta_C\}} \gamma$$

The main result of \cite{L2} is
\begin{theorem}\label{l2}
\begin{itemize}
\item  Given two classes $C$  and $C'$ of $W$,  the sets \fbox{$\bG_C$} and
\fbox{$\bG_{C'}$} are equal or disjoint.
\item \fbox{$\bG_C$} are the pieces of a stratification of $\bG$.
\end{itemize}
\end{theorem}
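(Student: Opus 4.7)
The plan is to extend the framework of Section~5 from unipotent classes to arbitrary conjugacy classes via Jordan decomposition, reducing the general statement to Theorem~\ref{cW to cG} applied inside the centralizers $C_\bG(s)$ of semisimple elements (which are connected, since $\bG$ is now assumed simply connected).

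First, I would establish a generalization of formula~(a) above -- call it formula~(b) -- expressing $\card{(\cB_w^\gamma)^F}$ as a polynomial in $q$ for any conjugacy class $\gamma$ of $\bG$. Writing a representative $g\in\gamma$ as $g=su$ with $s$ semisimple, $u$ unipotent and $su=us$, the set $\cB_w^\gamma$ should be related to $\cB_{w'}^\eta$ for a suitable unipotent class $\eta$ in $C_\bG(s)$, where $w'$ ranges over Weyl group elements of $C_\bG(s)$ parametrising the twisted rational structure inherited from $s$. Applying the unipotent formula inside $C_\bG(s)$ then yields the desired polynomial expression, and since by \cite[Theorem~1.1(b)]{GP} the trace of $T_w$ on irreducible Hecke modules depends only on $C$ for $w\in\Cmin$, the condition $\cB_w^\gamma\ne\emptyset$ is independent of the choice of $w\in\Cmin$, justifying the parenthetical remark preceding the definition of $\delta_C$.

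With formula~(b) in hand, for each $C\subset W$ the set of classes $\gamma$ satisfying $\cB_w^\gamma\ne\emptyset$ is parametrised by pairs $([s],\eta)$ meeting a matching condition coming from the reduction. For part~(1), suppose $\gamma\in\bG_C\cap\bG_{C'}$, so that $\dim\gamma=\delta_C=\delta_{C'}$. Applying the uniqueness of $\gamma_C$ in Theorem~\ref{cW to cG} inside $C_\bG(s)$ shows that $C$ and $C'$ pick out the same minimum-dimensional unipotent class there; taking the union over semisimple parts $[s]$ forces $\bG_C=\bG_{C'}$. For part~(2), every $g\in\bG$ with Jordan decomposition $g=su$ lies in some class $\gamma$ which, by the surjectivity in Theorem~\ref{cW to cG} applied to $C_\bG(s)$, satisfies $\gamma\vdash C$ for some $C$; descending within the Zariski closure in $C_\bG(s)$ if necessary to attain the minimum dimension $\delta_C$, we obtain $g\in\bG_C$, so the $\bG_C$ cover $\bG$. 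The local-closedness required for a stratification follows because each $\bG_C$ is a finite union of orbits of fixed dimension.

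The main obstacle is step~1: formulating the Jordan-decomposition reduction and proving formula~(b). One needs to control precisely how Borel subgroups of $\bG$ containing $s$ correspond to Borel subgroups of $C_\bG(s)$, and how the $W$-class $C$ distributes over conjugacy classes of $W(C_\bG(s))$ with respect to twisting by $s$. For exceptional types in bad characteristic, where centralisers of semisimple elements can degenerate, one would finish by verifying both parts directly using \Chevie, following the template of the code in Section~5 but iterating over pairs (semisimple class representative, unipotent class in its centraliser) rather than over unipotent classes alone.
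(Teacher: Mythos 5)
Your overall architecture (establish a formula (b) by Jordan decomposition, then verify computationally over pairs consisting of a semisimple centralizer and a unipotent class in it) matches the paper, but there is a genuine gap: the theoretical deduction of Theorem \ref{l2} from Theorem \ref{cW to cG} that occupies your middle paragraph does not work, and the paper never attempts it. First, formula (b) reads $\card{(\cB_w^\gamma)^F}=\frac{\card{\bG^F}}{\card{\bH^F}}\sum_{E'\in\Irr(W)}f(C,E')\sum_{u}\Trace(u\mid R_{\Res^W_{W_\bH}E'}^\bH)$: the class $C$ of $W$ is smeared over \emph{all} of $\Irr(W)$ restricted to $W_\bH$, so no single class of $W_\bH$ is matched to $C$, and there is no well-defined ``matching condition'' under which you could ``apply the uniqueness of $\gamma_C$ inside $C_\bG(s)$''. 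Second, even granting such a reduction, equal-or-disjoint is a global statement: $\bG_C$ is a union of classes of the common dimension $\delta_C$ with \emph{varying} semisimple parts (in the paper's $G_2$ output, the stratum of dimension $8$ contains the unipotent class $\tilde A_1$ of $\bG$ together with a class whose centralizer has type $\tilde A_1+A_1$), and the fact that $C$ and $C'$ share one class $\gamma$ cannot by itself force the unions over all other semisimple parts to coincide; also note that the uniqueness in Theorem \ref{cW to cG} concerns the closure order, whereas $\delta_C$ is a minimum of dimension across distinct centralizers whose contributing classes need not be comparable. Third, your covering argument breaks at ``descending within the Zariski closure if necessary'': the element $g$ is fixed, and if its class $\gamma$ has $\cB_w^\gamma\ne\emptyset$ but $\dim\gamma>\delta_C$, passing to a smaller class loses $g$; what must be shown is that for every class $\gamma$ there exists some $C$ for which $\gamma$ itself attains the minimum $\delta_C$, and neither bullet of Theorem \ref{cW to cG} supplies that.

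The paper sidesteps all of this: Theorem \ref{l2} is quoted as the main result of \cite{L2}, with Lusztig's proof taken as given, and the paper's contribution is the verification machinery --- formula (b); the remark (which you correctly reproduce via \cite[Theorem 1.1(b)]{GP}) that the right-hand side depends only on $C$ and not on $w\in\Cmin$; the observation that $\card{(\cB_w^\gamma)^F}$ depends on $\gamma$ only through $\bH=C_\bG(s)$ and the unipotent part $\gamma_u$; the enumeration of the possible $\bH$ up to conjugacy via subsystems of the extended Dynkin diagram with the characteristic exclusions of \cite{carter} and \cite{deriziotis} (the function {\tt SemisimpleCentralizerRepresentatives}, which incidentally corrects errors in the tables of \cite{deriziotis}); and the code {\tt LusztigMapb}, which computes $f(W,q)$ times the induction table times $g(\bH,q,p)$ for each $\bH$, forms the strata as the minimal-dimension pieces, and explicitly tests both bullets, raising errors if the strata are not pairwise disjoint or do not cover. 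Your final paragraph converges on exactly this computation, but note that the paper runs it in all characteristics, not only bad ones, and treats it as a check of Lusztig's theorem rather than as the missing half of a proof.
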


Assume again that $\BF$ is an algebraic closure of $\BF_p$, and that $F$ is
the Frobenius associated to an $\BF_q$-structure for $q$ a power of $p$. Up
to  replacing $F$ by some  power we may assume  that $\gamma$ is $F$-stable
and  that there exists $s$, the semisimple part of an $F$-stable element of
$\gamma$, such that the restriction of $F$ to $\bH:=C_\bG(s)$ is split.

Then, with the notation $f(C,E')$ introduced after formula (a)
we have for $w\in\Cmin$
$$
\card{(\cB_w^\gamma)^F}=\frac{\card{\bG^F}}{\card{\bH^F}}\sum_{E'\in\Irr(W)}
f(C,E')\sum_{u\in \bH^F_\uni\mid su\in\gamma}
\Trace(u\mid R_{\Res^W_{W_\bH}E'}^\bH).
\eqno(b)$$

It  is  clear  from  formula  (b) that $\card{(\cB_w^\gamma)^F}$ depends on
$\gamma$  only through  $\bH$ and  the unipotent class $\gamma_u\subset\bH$
defined as the unipotent part of $\gamma$. Also, using the same argument as
for  (a),  the  right-hand  side  of  (b)  does not depend on the choice of
$w\in\Cmin$.

To  compute (b),  we proceed  as for  (a). The  main difference  is that we
compute  the function $g$ in  the group $\bH$, and  multiply on the left by
the  matrix describing the  restriction of characters  from $W$ to $W_\bH$.
Here is the \Chevie\ code replacing the \verb+vdash+ function:

\begin{verbatim}
  List(f(W,q)*InductionTable(H,W).scalar*g(H,q,p),
    x->Filtered([1..Length(x)],i->x[i]<>0*X(Rationals)));
\end{verbatim}

To  check  theorem  \ref{l2},  we  need  a  list  of possible groups $\bH$.
According  to results  of \cite{carter}  and \cite{deriziotis}, to describe
their  Weyl  groups  we  must  consider  up to $W$-conjugacy the reflection
subgroups of $W$ generated by subsets of the {\em extended Dynkin diagram},
formed,  for each irreducible component of $W$, by the simple roots and the
negative  of  the  highest  root;  for  a  given characteristic $p$ we must
exclude  such  subgroups  which  have  a  coefficient divisible by $p$ when
expressing  their  simple  roots  in  an  adapted basis of the initial root
system.

The following function does the job for an irreducible $W$:

\begin{verbatim}
SemisimpleCentralizerRepresentatives:=function(W,p)
  local cent,E,J,R,indices;
  if Length(ReflectionType(W))<>1 then 
    Error("only implemented for irreducible groups");
  fi;
  indices:=W->W.rootInclusion{W.generatingReflections};
  cent:=[];
  E:=Concatenation(indices(W),[W.rootInclusion[2*W.N]]);
  for J in Combinations(E) do
    R:=ReflectionSubgroup(W,J);
    if ForAll(cent,G->IsomorphismType(R)<>IsomorphismType(G) or
      RepresentativeOperation(W,indices(R),indices(G),OnSets)=false)
    then Add(cent,R);
    fi;
  od;
  if p=0 then return cent;fi;
  return Filtered(cent,
    G->ForAll(Concatenation(SmithNormalFormMat(W.roots{indices(G)})),
      x->x=0 or x mod p<>0));
end;
\end{verbatim}

The  function  \verb+IsomorphismType+  describes the  isomorphism type of a
reflection group via, in the Weyl group case, a character string describing
the  isomorphism  type  of  the  root  system; for instance \verb-"A2+~A2"-
describes  the union  of two  systems of  type $A_2$  where the  second one
consists of short roots.

Such  a simple  program is  enough to  detect some  errors in the tables of
\cite{deriziotis}:  for $\bG$ of  type $F_4$ the  type $A_2+\tilde A_2$ for
$\bH$  is excluded in  characteristic 3, not  2; the same  is true for type
$\bH=3A_2$ in $\bG=E_7$; finally the type $\bH=A_3+3A_1$ (which is excluded
in characteristic 2) is altogether forgotten in $\bG=E_8$.

We now have all that we need to compute and display the strata.

\begin{verbatim}
LusztigMapb:=function(W,p)local l,i,s,cent,m,q,fmat,strata,map,cover;
  q:=X(Rationals)^2; fmat:=f(W,q);
  cent:=SemisimpleCentralizerRepresentatives(W,p);
  l:=List([1..Length(cent)],function(i)local H,t,uc;
    H:=cent[i];uc:=UnipotentClasses(H,p);
    t:=List(fmat*InductionTable(H,W).scalar*g(H,q,p),
       x->Filtered([1..Length(x)],i->x[i]<>0*x[i]));
    return List(t, x->List(x,cl->rec(H:=i,
                       class:=ClassName(uc,cl),
		       dim:=2*(W.N-uc.classes[cl].dimBu))));
    end);
  strata:=List(TransposedMat(l),function(s)local mindim;
    s:=Concatenation(s); mindim:=Minimum(List(s,x->x.dim));
    return Filtered(s,x->x.dim=mindim);end);
  map:=CollectBy([1..NrConjugacyClasses(W)],strata);
  strata:=Set(strata);
  if ForAny([1..Length(strata)],i->ForAny([i+1..Length(strata)],j->
    Length(Intersection(strata[i],strata[j]))<>0)) then
    Error("strata are not disjoint\n");
  fi;
  cover:=CollectBy(Concatenation(strata),i->i.H);
  if ForAny([1..Length(cover)],
    i->Length(cover[i])<>Size(UnipotentClasses(cent[i],p))) then
    Error("strata do not cover");
  fi;
  for i in [1..Length(map)] do
    Cut(SPrint("class(es) ",Join(List(map[i],j->ClassName(W,j)),", "),
       " => stratum(",strata[i][1].dim,") ",
       Join(List(strata[i],
         x->SPrint(IsomorphismType(cent[x.H]),":",x.class))," ")),
      rec(places:=" "));
  od;
end;
\end{verbatim}

The  function \verb+CollectBy(l,f)+ takes as argument a list \verb+l+ and a
function  \verb+f+ and returns a list of lists, each of them collecting all
elements  \verb+l[i]+  such  that  \verb+f(l[i])+  takes a given value. The
second  argument \verb+f+ may also be a list of same length as \verb+l+ and
this  time are collected the \verb+l[i]+  such that \verb+f[l[i]]+ takes a
given  value.  The  result  is  sorted  by  the value taken of \verb+f+.

Here are the results of \verb+LusztigMapb+ for $G_2$ in characteristics 0 and 3:

\begin{verbatim}
gap> LusztigMapb(CoxeterGroup("G",2),0);
class(es) G2 => stratum(12) : A1:2 G2:G2 A2:3 ~A1:2 ~A1+A1:2,2
class(es) A2 => stratum(10) A1:11 G2:G2(a1) A2:21 ~A1:11 
                            ~A1+A1:11,2 ~A1+A1:2,11
class(es) A0 => stratum(0) G2:1
class(es) A1 => stratum(6) G2:A1
class(es) A1+~A1 => stratum(8) G2:~A1 ~A1+A1:11,11
class(es) ~A1 => stratum(6) A2:111

gap> LusztigMapb(CoxeterGroup("G",2),3);
class(es) G2 => stratum(12) : A1:2 G2:G2 ~A1:2 ~A1+A1:2,2
class(es) A2 => stratum(10) A1:11 G2:G2(a1) ~A1:11 ~A1+A1:11,2 
                            ~A1+A1:2,11
class(es) ~A1 => stratum(6) G2:(~A1)3
class(es) A0 => stratum(0) G2:1
class(es) A1 => stratum(6) G2:A1
class(es) A1+~A1 => stratum(8) G2:~A1 ~A1+A1:11,11
\end{verbatim}
The classes $\gamma$ composing a stratum are written in the form \verb+A:B+
where  \verb+A+ describes the  isomorphism type of  the Weyl group of $\bH$
and \verb+b+ describes $\gamma_u$.

We notice that no group $\bH$ is of type $A_2$ in characteristic 3; in both
cases  the class  \verb+~A1+ of  $W$ corresponds  to a  stratum composed of
conjugacy classes of dimension 6, but the classes involved are quite different.

In  the listing obtained for $E_8$ in  characteristic 0, one can notice the
same  strata as pointed out  by Lusztig: in all  characteristics there is a
unipotent stratum

\begin{verbatim}
class(es) A1 => stratum(58) E8:A1
\end{verbatim}

And there is a stratum
\begin{verbatim}
class(es) 8A1, 6A1, 4A1'', 7A1, 5A1 => stratum(128) E8:4A1 
                                            D8:1111111111111111
\end{verbatim}
which loses the semisimple class \verb+D8:1111111111111111+ in characteristic 2.

\section{Spetses}
In  \cite{grade} and \cite{BMM},  quite a few  features of algebraic groups
are  generalized to a large class of finite complex reflection groups called
``Spetsial''.  In particular, there is a definition of a set of ``unipotent
characters''  of the ``associated  reductive group'' (which  does not exist
but  is a set of combinatorial data called a ``Spets''). Here is an example
of  our computations,  to be  compared with  the table  given above for the
unipotent  characters in a  reductive group of  type $G_2$. We consider the
group  $G_4$  in  the  Shephard-Todd  classification, which is a reflection
subgroup of $\GL(\BC^2)$ of size 24, isomorphic to $\SL_2(\BF_3)$.

Here is the result of TeXing the output of
\begin{verbatim}
FormatTeX(UnipotentCharacters(ComplexReflectionGroup(4)));
\end{verbatim}

\hfill{Unipotent characters for $G_{4}$}\hfill\hfill\break
{\tabskip 0mm\halign{\strut\vrule\kern 1mm\hfill$#$\kern 1mm\vrule&\kern 
1mm\hfill$#$\kern 1mm&\kern 1mm\hfill$#$\kern 1mm&\kern 1mm\hfill$#$\kern 1mm&
\kern 1mm\hfill$#$\kern 1mm\vrule\cr
\noalign{\hrule}
\hbox{$\gamma$}&\hbox{Deg($\gamma$)}&\hbox{FakeDegree}&\hbox{Fr($\gamma$)}&
\hbox{Label}\cr
\noalign{\hrule}
\phi_{1,0}&1&1&1&\cr
\phi_{1,4}&{-\sqrt {-3}\over6}q^4{\Phi''_3}\Phi_4{\Phi''_6}&q^4&1&
1\!\wedge\!-\zeta_3^2\cr
\phi_{1,8}&{\sqrt {-3}\over6}q^4{\Phi'_3}\Phi_4{\Phi'_6}&q^8&1&
-1\!\wedge\!\zeta_3^2\cr
\phi_{2,5}&{1\over2}q^4\Phi_2^2\Phi_6&q^5\Phi_4&1&1\!\wedge\!\zeta_3^2\cr
\phi_{2,3}&{3+\sqrt {-3}\over6}q{\Phi''_3}\Phi_4{\Phi'_6}&q^3\Phi_4&1&
1\!\wedge\!\zeta_3^2\cr
\phi_{2,1}&{3-\sqrt {-3}\over6}q{\Phi'_3}\Phi_4{\Phi''_6}&q\Phi_4&1&
1\!\wedge\!\zeta_3\cr
\phi_{3,2}&q^2\Phi_3\Phi_6&q^2\Phi_3\Phi_6&1&\cr
Z_3:2&{-\sqrt {-3}\over3}q\Phi_1\Phi_2\Phi_4&0&\zeta_3^2&
\zeta_3\!\wedge\!\zeta_3^2\cr
Z_3:11&{-\sqrt {-3}\over3}q^4\Phi_1\Phi_2\Phi_4&0&\zeta_3^2&
\zeta_3\!\wedge\!-\zeta_3\cr
G_4&{-1\over2}q^4\Phi_1^2\Phi_3&0&-1&-\zeta_3^2\!\wedge\!-1\cr
\noalign{\hrule}}
}
\medskip
As  in the Weyl  group case, unipotent  characters have a  degree, but this
time  it is not given by a  polynomial with rational coefficients, but with
coefficients  in $\BQ[\sqrt{-3}]$, which is  the smallest subfield of $\BC$
over which $G_4$ can be realized.

\end{document}